\renewcommand{\title}[1]{\begin{center}
    \begin{minipage}[t]{125mm}
        \Large\bf\begin{center} #1
        \end{center}
    \end{minipage}
\end{center}
\vskip3mm }
\renewcommand{\author}[2]{\centerline{#1}
\vskip2mm
\par
\centerline{\small\it
 \begin{minipage}[t]{145mm}
  \begin{center}
 #2
  \end{center}
 \end{minipage}
}
\vspace{8mm}
}
\newcommand{\resumen}[1]{
\begin{center}
\textbf{Abstract}\\

\begin{minipage}[t]{130mm}
\small
#1
\end{minipage}
\end{center}
}
\newcommand{\palabrasclave}[1]{
\begin{center}
\begin{minipage}[t]{130mm}
\small
\emph{Keywords: }
#1
\end{minipage}
\end{center}
}
\newtheorem{teorema}{Theorem}[section]
\newtheorem{lema}{Lemma}[section]
\newtheorem{proposicion}{Proposition}[section]
\theoremstyle{definition}
\newtheorem{ejemplo}{Example}[section]
\theoremstyle{remark}
\newtheorem{obs}{Observation}[section]
\newcommand\R{\mathbb{R}}
\begin{document}
%\linenumbers
\setcounter{page}{1}

\normalsize

\title{Lie algebra rank condition for bilinear control systems on $\mathbb{R}^2$}

\author{Efrain Cruz-Mullisaca, Victor H. Patty-Yujra}%
{
Instituto de investigación Matemática, Universidad Mayor de San Andrés, Bolivia
}

\newcommand{\Addresses}{{% Colocar las direcciones de los autores
  \bigskip
  \footnotesize

\begin{itemize}[label={},itemindent=-2em,leftmargin=2em]
\item \textsc{ E.Cruz-Mullisaca}, Instituto de Investigación Matemática, Universidad Mayor de San Andrés.\\  
    Calle 27 de Cota Cota, Campus Universitario, Edificio de Ciencias Puras y Naturales, 1er Piso. La Paz-- Bolivia\par\nopagebreak
    \textit{E-mail:}\texttt{ecruz@fcpn.edu.bo}
\item \textsc{ V.H.Patty-Yujra}, Instituto de Investigación Matemática, Universidad Mayor de San Andrés.\\  
    Calle 27 de Cota Cota, Campus Universitario, Edificio de Ciencias Puras y Naturales, 1er Piso. La Paz-- Bolivia\par\nopagebreak
    \textit{E-mail:}\texttt{vpattyy@fcpn.edu.bo}
\end{itemize}
}}

\hrule
\resumen{
We will study the controllability problem of a bilinear control system on $\mathbb{R}^2:$ the main result is the characterization of the Lie algebra rank condition for the system. On the other hand, using elementary techniques, we recover conditions for the controllability of the induced angular system on the projective space. Finally, we will give controllability criteria for the system.   
}

\palabrasclave{
Bilinear control system; Lie algebra rank condition; Induced angular system; Controllability.
}

\hrule

\section{Introduction}

We consider the bilinear control system given by 
\begin{equation}\label{sp}
\Sigma: \hspace{0.6in}\dot{x}(t)=(A+uB)x(t), \hspace{0.2in} t\in\mathbb{R},\ \ x(t)\in\mathbb{R}^2\smallsetminus\{0\},
\end{equation} where $A, B\in gl(2,\mathbb{R}),$ are $2\times 2$ matrices with real coefficients, and $$u\in\mathcal{U}=\{ u:\mathbb{R}\to U\subset\mathbb{R}:\ u\  \mbox{is locally constant} \} $$ is the set of admissible controls (for this definition and the following results we refer to \cite{CK,E}). We know that a necessary condition for the controllability of the control system $\Sigma$ on $\mathbb{R}^2\smallsetminus\{0\}$ is given by the {\it Lie algebra rank condition} (LARC): the Lie algebra generated by the control system $\Sigma,$ given by $\mathcal{L}_{\Sigma}=\langle A+uB\mid u\in \mathbb{R}  \rangle \subset gl(2,\mathbb{R})$
satisfies, for all $x\in\mathbb{R}^2\smallsetminus\{0\},$
\begin{equation}\label{alglie1}
\dim \mathcal{L}_{\Sigma}(x)=\dim\langle Ax+uBx \mid u\in\mathbb{R} \rangle =2.
\end{equation}

In this paper we will study the controllability problem of the bilinear control system $\Sigma$ on $\mathbb{R}^2\smallsetminus\{0\}.$ The main result, which will be proved in Section \ref{secLARC}, is the following characterization of the Lie algebra rank condition that in many works is assumed as a necessary condition without being explicitly described. 

\begin{teorema}\label{LARC_TEO1}
The bilinear control system $\Sigma$ satisfies the Lie algebra rank condition on $\mathbb{R}^2\smallsetminus \{0\}$ if and only if there exists $\tilde{A}$ and $\tilde{B}$ in $\mathcal{L}_{\Sigma}$ such that 
%for all $x\in \mathbb{R}^2\smallsetminus \{0\},$ $\tilde{A}x$ and $\tilde{B}x$ belongs to $\langle Ax+uBx \mid u\in\mathbb{R} \rangle,$  and
\begin{equation*}
\mbox{tr}^2\left(\mbox{adj}\left(\tilde{A}\right)\tilde{B}\right)-4\det\left(\mbox{adj}\left(\tilde{A}\right)\tilde{B}\right) < 0,
\end{equation*} where $\mbox{adj}\left(\tilde{A}\right)$ is the classical adjoint matrix of $\tilde{A}.$
\end{teorema}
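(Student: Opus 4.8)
The plan is to convert the scalar inequality into the definiteness of a binary quadratic form in the state $x$, so that the condition can be read off pointwise. First I would use the Cayley--Hamilton identity $\mbox{adj}(\tilde A)=\mbox{tr}(\tilde A)I-\tilde A$ for $2\times 2$ matrices to obtain
$$\mbox{tr}\big(\mbox{adj}(\tilde A)\tilde B\big)=\mbox{tr}(\tilde A)\mbox{tr}(\tilde B)-\mbox{tr}(\tilde A\tilde B),\qquad \det\big(\mbox{adj}(\tilde A)\tilde B\big)=\det(\tilde A)\det(\tilde B),$$
so that $D(\tilde A,\tilde B):=\mbox{tr}^2(\mbox{adj}(\tilde A)\tilde B)-4\det(\mbox{adj}(\tilde A)\tilde B)$ is symmetric in $\tilde A,\tilde B$. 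The crux is the identity
$$D(\tilde A,\tilde B)=\mbox{disc}\big(\phi_{\tilde A,\tilde B}\big),\qquad \phi_{\tilde A,\tilde B}(x):=\det\big[\,\tilde A x\ \big|\ \tilde B x\,\big],$$
where $\phi_{\tilde A,\tilde B}$ is the binary quadratic form given by the determinant of the matrix with columns $\tilde A x$ and $\tilde B x$; expanding $\phi_{\tilde A,\tilde B}(x)=\alpha x_1^2+\beta x_1x_2+\gamma x_2^2$ one checks directly that $\beta^2-4\alpha\gamma=D(\tilde A,\tilde B)$. Hence $D(\tilde A,\tilde B)<0$ if and only if $\phi_{\tilde A,\tilde B}$ is definite, i.e. $\tilde A x$ and $\tilde B x$ are linearly independent for every $x\neq 0$.

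Granting this, the implication $(\Leftarrow)$ is immediate: if $\tilde A,\tilde B\in\mathcal L_\Sigma$ satisfy $D(\tilde A,\tilde B)<0$, then $\tilde A x,\tilde B x$ are independent for all $x\neq 0$, so $\mbox{span}\{\tilde A x,\tilde B x\}\subseteq\mathcal L_\Sigma(x)$ forces $\dim\mathcal L_\Sigma(x)=2$ everywhere and the LARC holds.

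For $(\Rightarrow)$ I would argue by contraposition, and here the Lie algebra structure of $\mathcal L_\Sigma$ is indispensable (the converse can fail for a general linear subspace of $gl(2,\mathbb{R})$). Writing $gl(2,\mathbb{R})=\mathbb{R}I\oplus sl(2,\mathbb{R})$ and letting $\pi$ denote the projection onto the traceless part, centrality of $I$ makes $\pi$ a Lie algebra homomorphism, so $\mathfrak g_0:=\pi(\mathcal L_\Sigma)$ is a subalgebra of $sl(2,\mathbb{R})$ with the same common eigenvectors as $\mathcal L_\Sigma$. If the LARC holds then $\mathcal L_\Sigma$ has no common real eigenvector (otherwise $\dim\mathcal L_\Sigma(v)\le 1$) and $\dim\mathcal L_\Sigma\ge 2$. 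Invoking the classification of subalgebras of $sl(2,\mathbb{R})$---the only ones without a common real eigenvector are an elliptic line $\mathbb{R}Y$ (with $Y$ having non-real eigenvalues) and all of $sl(2,\mathbb{R})$---I would treat two cases. If $\mathfrak g_0=\mathbb{R}Y$, the bound $\dim\mathcal L_\Sigma\ge 2$ forces $I\in\mathcal L_\Sigma$, hence $\{I,Y\}\subset\mathcal L_\Sigma$, and since $\det(aI+bY)=a^2+b^2\det Y$ with $\det Y>0$ this pair spans a plane on which $\det$ is positive definite, giving $D(I,Y)=-4\det Y<0$. If $\mathfrak g_0=sl(2,\mathbb{R})$, then $\mathcal L_\Sigma$ equals $sl(2,\mathbb{R})$ or $gl(2,\mathbb{R})$, so $sl(2,\mathbb{R})\subseteq\mathcal L_\Sigma$ and I may take the negative definite pair $\tilde A=\mbox{diag}(1,-1)$, $\tilde B=\left(\begin{smallmatrix}0&1\\1&0\end{smallmatrix}\right)$, for which $D(\tilde A,\tilde B)=-4<0$.

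The main obstacle is exactly this converse: one must upgrade the pointwise statement ``for each $x$ some element of $\mathcal L_\Sigma$ moves $x$ off the line $\mathbb{R}x$'' to the uniform statement ``a single fixed pair $\tilde A,\tilde B$ does so simultaneously for all $x$'', an interchange of quantifiers that is false without Lie-closedness. The technical heart is therefore (i) the reduction to $\mathfrak g_0$ together with the classification above, and (ii) checking that the definite $2$-plane can be realized inside $\mathcal L_\Sigma$ itself rather than merely inside the projection $\mathfrak g_0$---this is where the linear functional reconstructing the trace part, and its vanishing on $[\mathfrak g_0,\mathfrak g_0]$, enters---while confirming that the remaining degenerate or indefinite configurations of $\det$ are precisely those admitting a common eigenvector, and hence violating the LARC.
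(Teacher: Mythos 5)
Your proof is correct, and its forward direction follows a genuinely different route from the paper's. The computational core --- that $D(\tilde A,\tilde B):=\mbox{tr}^2(\mbox{adj}(\tilde A)\tilde B)-4\det(\mbox{adj}(\tilde A)\tilde B)$ is the discriminant of the binary quadratic form $x\mapsto\det\bigl[\,\tilde Ax\mid\tilde Bx\,\bigr]$, so that $D(\tilde A,\tilde B)<0$ is equivalent to independence of $\tilde Ax$ and $\tilde Bx$ for all $x\neq0$ --- is exactly Lemma \ref{lema2}, and your $(\Leftarrow)$ direction coincides with the paper's. The difference is in $(\Rightarrow)$: the paper passes from ``$\dim\mathcal{L}_{\Sigma}(x)=2$ for every $x\neq0$'' to ``there is a \emph{single} pair $\tilde A,\tilde B\in\mathcal{L}_{\Sigma}$ with $\tilde Ax,\tilde Bx$ independent for \emph{every} $x$'' by bare assertion, whereas you correctly isolate this interchange of quantifiers as the real content and prove it: project $\mathcal{L}_{\Sigma}$ to $sl(2,\mathbb{R})$, note that LARC forces $\dim\mathcal{L}_{\Sigma}\geq2$ and excludes a common real eigenvector, classify the subalgebras of $sl(2,\mathbb{R})$ without a common real eigenvector as the elliptic lines and $sl(2,\mathbb{R})$ itself, and exhibit a definite pair inside $\mathcal{L}_{\Sigma}$ in each case (the pair $\{I,Y\}$ with $D(I,Y)=-4\det Y<0$, respectively a hyperbolic pair in $sl(2,\mathbb{R})\subseteq\mathcal{L}_{\Sigma}$, that inclusion coming from perfectness of $sl(2,\mathbb{R})$). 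All of these steps check out, so your argument supplies a justification the paper omits; what the paper's version buys is brevity, what yours buys is an actual proof of the hard implication. One parenthetical claim of yours is inaccurate, though it does not affect your proof: the interchange does \emph{not} in fact require Lie-closedness. If $V\subseteq gl(2,\mathbb{R})$ is any linear subspace with $Vx=\mathbb{R}^2$ for all $x\neq0$, the only nontrivial case is $\dim V=3$, say $V=\{M:\mbox{tr}(CM)=0\}$; the pointwise condition is equivalent to $\det C\neq0$, and since the polar form of the signature-$(2,2)$ quadratic form $\det$ is $\tfrac12\mbox{tr}(\mbox{adj}(M)N)$ with $V^{\perp}=\mathbb{R}\,\mbox{adj}(C)$, invertibility of $C$ makes $\det|_{V}$ nondegenerate, hence of signature $(2,1)$ or $(1,2)$, hence $V$ contains a $2$-plane $\mbox{span}\{\tilde A,\tilde B\}$ on which $\det$ is definite, and $D(\tilde A,\tilde B)$ is precisely the discriminant of $\det$ on that plane, so it is negative. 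So an even more elementary argument is available; but your Lie-theoretic one is sound as written.
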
 

An other necessary condition for the controllability of the system $\Sigma$ on $\mathbb{R}^2\smallsetminus\{0\}$ is given by the controllability of the induced {\it angular system} $\mathbb{P}\Sigma$ on the real projective space $\mathbb{P}^1$ defined as the projection of $\Sigma$ on  $\mathbb{P}^ 1 ,$ {\it i.e.,}
\begin{equation}\label{sp0}
\mathbb{P}\Sigma:\hspace{0.6in}\dot{s}(t)=h(A,s(t))+u h(B,s(t)), \ s\in\mathbb{P}^1,
\end{equation} where, for example, $h(A,s)=(A-s^\top AsI_2)s,$ with $I_2$ the $2\times 2$ identity matrix and $u\in \mathcal{U};$ see \cite[Corollary 12.2.6]{CK}. Using a different technique than the one given in \cite{C}, we will show in Theorem \ref{controlproy} that the controllability of $\mathbb{P}\Sigma$ on $\mathbb{P}^1$ is equivalent to the existence of a control $u\in\mathcal{U}$ such that the matrix $A+uB$ has a complex eigenvalue (see Observations \ref{delta}-\ref{obs1}). Finally, in Theorems \ref{critcontrol}-\ref{critcontrol1} we will give controllability criteria for the bilinear system $\Sigma$ on $\mathbb{R}^2\smallsetminus\{0\};$ these results improves the criterion presented in \cite[Theorem 6.3]{C} since we explicitly know the Lie algebra rank condition.

We quote the following related results: the relationship between the controllability of the system $\Sigma$ on $\mathbb{R}^2\smallsetminus\{0\}$ and the controllability of the induced angular system $\mathbb{P}\Sigma$ on $\mathbb{ P}^1$ is given as a particular case of Corollary 12.2.6 in \cite{CK}; see also \cite[Theorem 2.4]{C}. In \cite{C}, the first author and his collaborators study the controllability problem of a bilinear control system in the plane, however, they do not make explicit the Lie algebra rank condition, consequently, the controllability criterion they describe is incomplete, this motivates the present paper. 

The outline of the paper is as follows: in Section \ref{secLARC} we will prove Theorem \ref{LARC_TEO1},  the main result of this paper: the Lie algebra rank condition, a necessary condition for the controllability of the system $\Sigma$ on $\mathbb{R}^2\smallsetminus\{0\},$ is characterized in terms of the existence of  a complex eigenvalue of the matrix $\mbox{adj}(\tilde{A})\tilde{B},$ where $\tilde{A}$ and $\tilde{B}$ belong to $\mathcal{L}_{\Sigma}.$ In Section \ref{conProy} we will study the controllability of the induced angular system $\mathbb{P}\Sigma$ on $\mathbb{P}^1;$ we recover the result obtained in \cite[Theorem 3.3]{C}: suppose that $\Sigma$ satisfies LARC on $\mathbb{R}^2\smallsetminus\{0\},$ then $\mathbb{P}\Sigma$ is controllable on $\mathbb{P}^1$ if and only if there exists $u\in\mathcal{U}$ such that $A+uB$ has a complex eigenvalue; in contrast to the proof given in \cite{C}, we will use an elementary technique. Finally, in Section \ref{secESP} we describe a controllability criterion for the bilinear system $\Sigma$ over $\mathbb{R}^2\smallsetminus\{0\};$ we improve the result given in \cite[Theorem 6,3 (b)]{C} since we make explicit the Lie algebra rank condition of the system.

\section{Lie algebra rank condition: proof of Theorem \ref{LARC_TEO1}}\label{secLARC}

\begin{lema}\label{lema2} For all $x\in \mathbb{R}^2\smallsetminus\{0\},$ $Ax$ and $Bx$ are linearly independent if and only if 
\begin{equation*}
\mbox{tr}^2(\mbox{adj}(A)B)-4\det(\mbox{adj}(A)B) < 0,
\end{equation*} {\it i.e.,} $\mbox{adj}(A)B$ has a complex eigenvalue, where $\mbox{adj}(A)$ is the classical adjoint matrix of $A.$
\end{lema}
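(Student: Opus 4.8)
The plan is to reduce the linear-independence statement to the non-vanishing of a single binary quadratic form and then read off its discriminant.

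First I would observe that, for a fixed $x\in\mathbb{R}^2\smallsetminus\{0\}$, the vectors $Ax$ and $Bx$ are linearly independent precisely when the $2\times 2$ matrix $[\,Ax\mid Bx\,]$ having $Ax$ and $Bx$ as columns has nonzero determinant (a vanishing determinant, including the case where one column is zero, signals dependence). Hence the assertion ``$Ax$ and $Bx$ are linearly independent for all $x\neq 0$'' is equivalent to ``$q(x):=\det[\,Ax\mid Bx\,]\neq 0$ for all $x\neq 0$'', where $q$ is a homogeneous quadratic form in $x=(x_1,x_2)$.

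The key step is to express $q$ through $M:=\mbox{adj}(A)B$. Multiplying the column matrix on the left by $\mbox{adj}(A)$ and using $\mbox{adj}(A)A=\det(A)I_2$ gives $\mbox{adj}(A)[\,Ax\mid Bx\,]=[\,\det(A)\,x\mid Mx\,]$; taking determinants and using the $2\times2$ identity $\det(\mbox{adj}(A))=\det(A)$ yields $\det(A)\,q(x)=\det(A)\,\det[\,x\mid Mx\,]$ for all $A,B,x$. Since both sides are polynomials agreeing on the dense set $\{\det A\neq 0\}$, I conclude $q(x)=\det[\,x\mid Mx\,]$ for every $A$ (alternatively one may simply expand both determinants and compare coefficients, which covers the degenerate $\det A=0$ case uniformly). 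A direct expansion then gives $q(x)=m_{21}x_1^2+(m_{22}-m_{11})x_1x_2-m_{12}x_2^2$, where the $m_{ij}$ are the entries of $M$.

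It remains to recall that a real binary quadratic form $\alpha x_1^2+\beta x_1x_2+\gamma x_2^2$ is nonzero for all $(x_1,x_2)\neq(0,0)$ if and only if its discriminant $\beta^2-4\alpha\gamma$ is negative; the degenerate case $\alpha=\gamma=0$, where the form factors and always has a nontrivial zero, is correctly excluded because there $\beta^2-4\alpha\gamma=\beta^2\geq 0$. Applying this with $\alpha=m_{21}$, $\beta=m_{22}-m_{11}$, $\gamma=-m_{12}$, a short computation gives $\beta^2-4\alpha\gamma=(m_{11}+m_{22})^2-4(m_{11}m_{22}-m_{12}m_{21})=\mbox{tr}^2(M)-4\det(M)$. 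Thus $q$ is nonvanishing off the origin if and only if $\mbox{tr}^2(\mbox{adj}(A)B)-4\det(\mbox{adj}(A)B)<0$, which is exactly the condition that the characteristic polynomial of $M$ have negative discriminant, i.e.\ that $M=\mbox{adj}(A)B$ have a complex eigenvalue. The only real obstacle is purely computational: the careful sign-bookkeeping that matches the coefficients of $q$ to the entries of $M$ and identifies the discriminant with $\mbox{tr}^2-4\det$; everything else is routine.
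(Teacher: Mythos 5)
Your proof is correct, and it takes a genuinely different route to the discriminant than the paper does, although both begin identically by reducing the statement to the nonvanishing of the binary quadratic form $q(x)=\det\begin{pmatrix} Ax & Bx\end{pmatrix}$ on $\mathbb{R}^2\smallsetminus\{0\}$. The paper expands $q$ coefficientwise in terms of the mixed column determinants $\det\begin{pmatrix} A_i & B_j\end{pmatrix}$ and then needs two auxiliary identities --- the Cayley--Hamilton consequence $\det(A+rB)=\det A+\left[\mbox{tr}(A)\mbox{tr}(B)-\mbox{tr}(AB)\right]r+(\det B)r^2$ and a coordinate computation expressing $\det(A)\det(B)$ through those mixed determinants --- to recognize the discriminant of $q$ as $\mbox{tr}^2(\mbox{adj}(A)B)-4\det(A)\det(B)$, finishing with $\det(\mbox{adj}(A))=\det(A)$. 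Your left-multiplication by $\mbox{adj}(A)$, giving $\mbox{adj}(A)\begin{pmatrix} Ax & Bx\end{pmatrix}=\begin{pmatrix} \det(A)\,x & Mx\end{pmatrix}$ with $M=\mbox{adj}(A)B$, makes the matrix $M$ appear structurally at the outset and collapses the discriminant computation to the standard characteristic-polynomial discriminant of the single matrix $M$ via $q(x)=\det\begin{pmatrix} x & Mx\end{pmatrix}$; this is shorter and explains \emph{why} $\mbox{adj}(A)B$ is the right object. The price is the extra step needed when $\det A=0$, which you handle correctly by the polynomial-identity (or direct expansion) argument; the paper's unconditional coefficient computation avoids this case distinction at the cost of more bookkeeping. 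Your closing appeal to the fact that a real binary quadratic form is anisotropic if and only if its discriminant is negative, including the check that the degenerate cases are covered, is also sound.
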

\begin{proof}
We have that $Ax$ and $Bx$ are linearly independent, for all $x\in \mathbb{R}^2\smallsetminus\{0\},$ if and only if
\begin{equation}\label{sp1}
\det\begin{pmatrix}
Ax &  Bx  \end{pmatrix}\neq 0
\end{equation} for all $x\in\mathbb{R}^2 \smallsetminus \{0\}.$ Writing $A=\begin{pmatrix}
A_1 & A_2
\end{pmatrix}$ and $B=\begin{pmatrix}
B_1 & B_2
\end{pmatrix},$ with $A_1$ and $A_2$ (resp. $B_1$ and $B_2$) are the column vectors of $A$ (resp. of $B$), for all $x=(x_1, x_2)^\top\in\mathbb{R}^2\smallsetminus\{0\}$ we have
\begin{equation*}
Ax=\begin{pmatrix}
x_1A_1 + x_2A_2
\end{pmatrix}  \hspace{0.3in}\mbox{and}\hspace{0.3in} Bx=\begin{pmatrix}
x_1B_1 + x_2B_2
\end{pmatrix};
\end{equation*} 
using the bilinearity of the determinant we get
\begin{align*}
\det\begin{pmatrix}
Ax &  Bx  \end{pmatrix}
%=x_1\det\begin{pmatrix}
%A_1 &  x_1B_1 + x_2B_2  \end{pmatrix}+x_2\det\begin{pmatrix}
%A_2 &  x_1B_1 + x_2B_2  \end{pmatrix} \\
= x_1^2\det\begin{pmatrix}
A_1 &  B_1 \end{pmatrix}+x_1x_2 \left[ \det\begin{pmatrix}
A_1 &  B_2  \end{pmatrix} + \det\begin{pmatrix}
A_2 &  B_1  \end{pmatrix} \right]+x_2^2 \det\begin{pmatrix}
A_2 &  B_2  \end{pmatrix},
\end{align*}
thus, using \eqref{sp1} we have: $Ax$ and $Bx$ are linearly independent, for all $x\in \mathbb{R}^2\smallsetminus\{0\},$ if and only if 
\begin{equation}\label{ecua2}
\left[ \det\begin{pmatrix}
A_1 &  B_2  \end{pmatrix} + \det\begin{pmatrix}
A_2 &  B_1  \end{pmatrix} \right]^2-4\det\begin{pmatrix}
A_1 &  B_1  \end{pmatrix}\det\begin{pmatrix}
A_2 &  B_2  \end{pmatrix} < 0,
\end{equation} {\it i.e.,} the homogeneous equation of second degree of two variables $\det\begin{pmatrix}
Ax &  Bx  \end{pmatrix}=0$  has no real roots. In what follows, we will characterize the inequality \eqref{ecua2}.
Using the Cayley-Hamilton Theorem \cite[Theorem 2.2]{B}: $A^2=\mbox{tr}(A)A-(\det A) I_2,$ for all $A\in gl(2,\mathbb{R}),$ we obtain the formula
\begin{equation}\label{detf1}
\det(A+rB)=\det A + \left[  \mbox{tr}(A)\mbox{tr}(B)- \mbox{tr}(AB) \right]r + (\det B) r^2,
\end{equation} for all $A, B\in gl(2,\mathbb{R})$ and $r\in\R,$ see \cite[Lemma 2.7]{B}.
On the other hand, if $A \in gl(2,\mathbb{R})$ we can write
$\mbox{adj}(A)=\mbox{tr}(A)I_2-A,$ where $\mbox{adj}(A)$ is the classical adjoint matrix of $A,$ thus  
\begin{equation}\label{foradj}
\mbox{tr}(\mbox{adj}(A)B)
 = \mbox{tr}\left( \mbox{tr}(A)B-AB \right) = \mbox{tr}(A)\mbox{tr}(B)-\mbox{tr}(AB),
\end{equation} for all $A, B \in gl(2,\mathbb{R}).$ Moreover, we note that
%\begin{lem}\label{lem1} \rm Para las matrices $A, B \in gl(2,\mathbb{R}),$ usando la notación en \eqref{notacion}, se satisface
%\begin{equation*}
%\det\begin{pmatrix}
%A_1 &  B_2  \end{pmatrix} - \det\begin{pmatrix}
%A_2 &  B_1  \end{pmatrix} 
%= \mbox{tr}(\mbox{adj}(A)B).
%\end{equation*}
%\end{lem}
%Usando la bilinealidad del determinante se tiene
\begin{align*}
\det(A +  B ) 
%& = \det\begin{pmatrix}
%\begin{pmatrix}
%A_1 &  A_2  \end{pmatrix} +  \begin{pmatrix}
%B_1 &  B_2  \end{pmatrix}  \end{pmatrix}  
%\\ 
&= \det\begin{pmatrix}
A_1+B_1 & A_2+B_2  \end{pmatrix} 
\\ &= \det\begin{pmatrix}
A_1 &  A_2  \end{pmatrix} + \det\begin{pmatrix}
A_1 &  B_2  \end{pmatrix} + \det\begin{pmatrix}
B_1 &  A_2  \end{pmatrix} + \det\begin{pmatrix}
B_1 &  B_2  \end{pmatrix} 
\\ &= \det A  + \det\begin{pmatrix}
A_1 &  B_2  \end{pmatrix} - \det\begin{pmatrix}
A_2 &  B_1  \end{pmatrix} + \det
B;
\end{align*} using \eqref{detf1} and \eqref{foradj} we get
\begin{align}\label{larc1}
\det\begin{pmatrix}
A_1 &  B_2  \end{pmatrix} - \det\begin{pmatrix}
A_2 &  B_1  \end{pmatrix} 
%&=\det(A+B)-\det A - \det B \\
%&= \mbox{tr}(A)\mbox{tr}(B)-\mbox{tr}(AB) \\
&= \mbox{tr}(\mbox{adj}(A)B);
\end{align}
on the other hand, for all $A, B \in gl(2,\mathbb{R}),$ by a straightforward computation in coordinates, we get
\begin{equation}\label{larc2}
\det(A)\det(B)=\det\begin{pmatrix}
A_1 &  B_1  \end{pmatrix}\det\begin{pmatrix}
A_2 &  B_2  \end{pmatrix} - \det\begin{pmatrix}
A_1 &  B_2  \end{pmatrix}\det\begin{pmatrix}
A_2 &  B_1  \end{pmatrix}. 
\end{equation}
Finally, using \eqref{larc1} and \eqref{larc2}, we directly obtain
\begin{align*}
& \left[ \det\begin{pmatrix}
A_1 &  B_2  \end{pmatrix} + \det\begin{pmatrix}
A_2 &  B_1  \end{pmatrix} \right]^2 -4\det\begin{pmatrix}
A_1 &  B_1  \end{pmatrix}\det\begin{pmatrix}
A_2 &  B_2  \end{pmatrix} = \\
& =  \left[ \det\begin{pmatrix}
A_1 &  B_2  \end{pmatrix} - \det\begin{pmatrix}
A_2 &  B_1  \end{pmatrix} \right]^2 - 4 \left[ \det\begin{pmatrix}
A_1 &  B_1  \end{pmatrix}\det\begin{pmatrix}
A_2 &  B_2  \end{pmatrix} - \det\begin{pmatrix}
A_1 &  B_2  \end{pmatrix}\det\begin{pmatrix}
A_2 &  B_1  \end{pmatrix} \right] \\
&= \hspace{0.1in} \mbox{tr}^2(\mbox{adj}(A)B)-4\det(A)\det(B),
\end{align*} since $\det(\mbox{adj}(A))=\det(A),$  the lemma is proved. 
\end{proof}

%Using Lemmas \ref{lema1} and \ref{lema2} we obtain the proof of Theorem \ref{LARC_TEO1}.

\begin{proof}[Proof of Theorem \ref{LARC_TEO1}]
The linear space $\mathcal{L}_{\Sigma}(x)=\langle Ax+uBx\mid u\in \mathbb{R}  \rangle \subset T_x\R^2$ is generated by the vectors of the form $Z_u(x)=(A+uB)x \in \mathbb{R}^2,$ with $u\in\mathbb{R};$ taking $u=0,$ we have a direction given by $Ax;$ moreover, for all $u\neq v$ in $\mathbb{R},$ we have the direction $Z_u(x)-Z_v(x)=(u-v)Bx.$ On the other hand,  for all $u$ and  $v$ in $\mathbb{R},$ with $u\neq v,$ the Lie bracket of the vector fields $Z_u=A+uB$ and $Z_v=A+vB$ in $\mathcal{L}_{\Sigma}$ satisfy 
\begin{align*}
[Z_u,Z_v] = [A+uB,A+vB] 
%&=[A,A]+v[A,B]+u[B,A]+uv[B,B] \\
%&=v[A,B]-u[A,B] \\
=(v-u)[A,B],
\end{align*}
{\it i.e.,} the Lie bracket of $Z_u$ and $Z_v,$ at $x,$ is contained in the direction generated by the vector $[A,B]x.$ Thus, all linear combinations of $Z_u(x),$ $u\in\mathbb{R},$ and Lie brackets of these are generated by the set $$\{ Ax,\ Bx, \ [A,B]x,\ [A,[A,B]]x,\ [B,[A,B]]x, \ldots \}.$$ 
Therefore, $\dim\mathcal{L}_{\Sigma}(x)=2,$ for all $x\in\R^{2}\smallsetminus\{0\},$  if and only if there exists $\tilde{A}$ and $\tilde{B}$ in $\mathcal{L}_{\Sigma}$ such that, the vectors $\tilde{A}x$ and $\tilde{B}x$ in $\mathcal{L}_{\Sigma}(x)$ are linearly independent, for all $x\in\R^{2}\smallsetminus\{0\}.$ Using Lemma \ref{lema2} we obtain the proof of Theorem \ref{LARC_TEO1}.
\end{proof}

\begin{obs}\label{obs0} In Theorem \ref{LARC_TEO1}, if  $\tilde{A}=A$ and $\tilde{B}=[A,B],$ since $\mbox{tr}\left(\mbox{adj}(A)[A,B]\right)=0,$
%\begin{align*} 
%\mbox{tr}\left(\mbox{adj}(A)[A,B]\right) &= \mbox{tr}\left(\mbox{adj}(A)AB \right)-\mbox{tr}\left(\mbox{adj}(A)BA \right) \\
%&= \mbox{tr}\left(\det(A) B \right)-\mbox{tr}\left(A\mbox{adj}(A)B \right)=0;
%\end{align*} 
we have $Ax$ and $[A,B]x$ are linearly independent, for all $x\in\mathbb{R}^2\smallsetminus\{0\},$ if and only if
\begin{equation*}
-4\det(A)\det[A,B]=\mbox{tr}^2\left(\mbox{adj}(A)[A,B]\right)-4\det\left(\mbox{adj}(A)[A,B]\right) < 0.
\end{equation*} Therefore, the system $\Sigma$ satisfies the Lie algebra rank condition on $\mathbb{R}^2\smallsetminus\{0\}$ if any of the following conditions holds
\begin{enumerate}
\item $\mbox{tr}^2(\mbox{adj}(A)B)-4\det(\mbox{adj}(A)B) < 0,$ or 
\item $\det(A)\det[A,B]>0$ or $\det(B)\det[A,B]>0.$
\end{enumerate}
\end{obs}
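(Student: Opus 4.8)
The plan is to reduce the whole observation to Lemma \ref{lema2} and Theorem \ref{LARC_TEO1} by means of a single elementary trace identity, so that the only genuine computation is the vanishing of $\mbox{tr}(\mbox{adj}(A)[A,B])$. First I would verify that $\mbox{tr}(\mbox{adj}(A)[A,B])=0$ for every $A,B\in gl(2,\mathbb{R})$. Using the identity $\mbox{adj}(A)=\mbox{tr}(A)I_2-A$ already recorded in the proof of Lemma \ref{lema2}, this trace splits as $\mbox{tr}(A)\,\mbox{tr}([A,B])-\mbox{tr}(A[A,B])$. The first summand vanishes because the trace of any commutator is zero, and the second vanishes by the cyclic invariance of the trace, since $\mbox{tr}(A[A,B])=\mbox{tr}(A^2B)-\mbox{tr}(ABA)=0$. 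Exchanging the roles of $A$ and $B$ and using $[A,B]=-[B,A]$, the same computation yields $\mbox{tr}(\mbox{adj}(B)[A,B])=0$ as well, which I will need for the last sufficient condition.

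With this identity in hand, I would substitute the pair $(A,[A,B])$ into Lemma \ref{lema2}. The criterion $\mbox{tr}^2(\mbox{adj}(A)[A,B])-4\det(\mbox{adj}(A)[A,B])<0$ then collapses to $-4\det(\mbox{adj}(A)[A,B])<0$, and invoking $\det(\mbox{adj}(A))=\det(A)$ this reads $-4\det(A)\det[A,B]<0$. This is precisely the asserted equivalence characterizing the linear independence of $Ax$ and $[A,B]x$ for all $x\in\mathbb{R}^2\smallsetminus\{0\}$, establishing the displayed equation of the observation.

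For the three sufficient conditions I would first note that $A$, $B$ and $[A,B]$ all lie in $\mathcal{L}_\Sigma$: indeed $A=A+0\cdot B\in\mathcal{L}_\Sigma$, then $B=(A+B)-A\in\mathcal{L}_\Sigma$ as a linear space, and $[A,B]\in\mathcal{L}_\Sigma$ because $\mathcal{L}_\Sigma$ is a Lie algebra. Hence each of the pairs below is admissible in Theorem \ref{LARC_TEO1}. Condition (1) is Theorem \ref{LARC_TEO1} verbatim with $(\tilde{A},\tilde{B})=(A,B)$; the inequality $\det(A)\det[A,B]>0$ is the equivalence just proved, applied with $(\tilde{A},\tilde{B})=(A,[A,B])$; and $\det(B)\det[A,B]>0$ follows in the same way with $(\tilde{A},\tilde{B})=(B,[A,B])$, using the symmetric trace computation $\mbox{tr}(\mbox{adj}(B)[A,B])=0$. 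Since any one of these supplies a pair $\tilde{A},\tilde{B}\in\mathcal{L}_\Sigma$ satisfying the hypothesis of Theorem \ref{LARC_TEO1}, each forces LARC. I do not expect any serious obstacle here: the content is entirely in the trace identity, which is pure cyclicity, and the only point requiring a moment's care is confirming that $B$ itself (not merely the fields $A+uB$) belongs to $\mathcal{L}_\Sigma$ as a linear space.
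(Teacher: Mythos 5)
Your proposal is correct and follows essentially the same route as the paper: the observation is a direct specialization of Theorem \ref{LARC_TEO1} and Lemma \ref{lema2} to the pairs $(A,[A,B])$ and $(B,[A,B])$, hinging on the identity $\mbox{tr}(\mbox{adj}(A)[A,B])=0$, which you justify correctly via $\mbox{adj}(A)=\mbox{tr}(A)I_2-A$ and cyclicity of the trace, together with $\det(\mbox{adj}(A))=\det(A)$. Your explicit check that $A$, $B$ and $[A,B]$ all lie in $\mathcal{L}_\Sigma$ is a detail the paper leaves implicit, and it is handled correctly.
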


\begin{ejemplo} Let us consider the bilinear control system on $\mathbb{R}^2\smallsetminus \{0\}$ given by
\begin{equation*}
\dot{x}=Ax+uBx=\begin{pmatrix}
-1 & 1 \\ 0 & 1
\end{pmatrix}x + u \begin{pmatrix}
0 & 1 \\ -1 & 0
\end{pmatrix}x. 
\end{equation*} From a direct calculation, since $\mbox{adj}(A)B=\begin{pmatrix}
1 & 1 \\ 1 & 0
\end{pmatrix},$ we have
$$\mbox{tr}^2(\mbox{adj}(A)B)-4\det(\mbox{adj}(A)B)=5 > 0;$$ on the other hand, since $\det(A)\det[A,B]=(-1)(-5)=5 > 0,$ according to Theorem \ref{LARC_TEO1} and Observation \ref{obs0}, the system satisfies the Lie algebra rank condition.
\end{ejemplo}

\begin{ejemplo} Let us consider the bilinear control system on $\mathbb{R}^2\smallsetminus \{0\}$ given by
\begin{equation*}
\dot{x}=Ax+uBx=\begin{pmatrix}
1 & 0 \\ 0 & 0
\end{pmatrix}x + u \begin{pmatrix}
0 & 1 \\ -1 & 0
\end{pmatrix}x. 
\end{equation*} 
From a direct calculation we have $\mbox{tr}^2(\mbox{adj}(A)B)-4\det(\mbox{adj}(A)B)=0$ and
\begin{list}{}{}
\item $\mbox{tr}^2(\mbox{adj}(A)[A,B])-4\det(\mbox{adj}(A)[A,B])=0;$
\item $\mbox{tr}^2(\mbox{adj}(B)[A,B])-4\det(\mbox{adj}(B)[A,B])=4 > 0;$ 
\item $\mbox{tr}^2(\mbox{adj}(A)[A,[A,B]])-4\det(\mbox{adj}(A)[A,[A,B]])=0;$ 
\item $\mbox{tr}^2(\mbox{adj}(B)[A,[A,B]])-4\det(\mbox{adj}(B)[A,[A,B]])=0;$ 
\item $\mbox{tr}^2(\mbox{adj}(A)[B,[A,B]])-4\det(\mbox{adj}(A)[B,[A,B]])=4> 0;$ 
\item $\mbox{tr}^2(\mbox{adj}(B)[B,[A,B]])-4\det(\mbox{adj}(B)[B,[A,B]])=16> 0;$
\item $\mbox{tr}^2(\mbox{adj}([A,B])[A,[A,B]])-4\det(\mbox{adj}([A,B])[A,[A,B]])=4> 0;$
\item $\mbox{tr}^2(\mbox{adj}([A,B])[B,[A,B]])-4\det(\mbox{adj}([A,B])[B,[A,B]])=-16 < 0.$
\end{list} Therefore, according to Lemma \ref{lema2}, the vectors $[A,B]x$ and $[B,[A,B]]x$ in $\mathcal{L}_{\Sigma}(x)$ are linearly independent, for all $x\in\mathbb{R}^2\smallsetminus \{0\},$ thus, the system satisfies the Lie algebra rank condition.
\end{ejemplo}

\begin{ejemplo} Let us consider the bilinear control system on $\mathbb{R}^2\smallsetminus \{0\}$ given by
\begin{equation*}
\dot{x}=Ax+uBx=\begin{pmatrix}
1 & 2 \\ 0 & 1
\end{pmatrix}x + u \begin{pmatrix}
2 & 3 \\ 0 & 2
\end{pmatrix}x. 
\end{equation*} Note that the Lie bracket satisfies $[A,B]=0.$ Since $\mbox{adj}(A)B=\begin{pmatrix}
2 & -1 \\ 0 & 2
\end{pmatrix},$ we have that
\begin{equation*}
\mbox{tr}^2(\mbox{adj}(A)B)-4\det(\mbox{adj}(A)B)=4^2-4(4)=0,
\end{equation*} thus, according to Theorem \ref{LARC_TEO1}, the system does not satisfies the Lie algebra condition. We can verify this as follows: since
$$\mathcal{L}_{\Sigma}(x)=\langle Ax+uBx \mid u\in\mathbb{R} \rangle=\mbox{span}\{Ax,Bx\}=\mbox{span}\begin{pmatrix}
x_1+2x_2 & 2x_1+3x_2 \\ x_2 & 2x_2
\end{pmatrix},$$ we easily get $\dim \mathcal{L}_{A}\begin{pmatrix}
1 \\ 0
\end{pmatrix}=1,$ as expected.
\end{ejemplo}

\section{Controllability of $\mathbb{P}\Sigma$ on the projective space $\mathbb{P}^1$ }\label{conProy}

We consider the bilinear control system $\Sigma$ on $\mathbb{R}^2\smallsetminus\{0\}$ (as in \eqref{sp}) given in coordinates by
\begin{equation}\label{sp001}
\dot{\begin{pmatrix}
x_1 \\ x_2
\end{pmatrix}}=\left(\begin{pmatrix}
a_1 & a_2 \\ a_3 & a_4 \end{pmatrix}+u\begin{pmatrix}
b_1 & b_2 \\ b_3 & b_4 \end{pmatrix}\right)\begin{pmatrix}
x_1 \\ x_2
\end{pmatrix}.
\end{equation} 
Projecting the system \eqref{sp001} on the projective space $\mathbb{P}^1$ we obtain, as in \eqref{sp0}, the induced angular system $\mathbb{P}\Sigma;$ written in coordinates $(s_1,s_2)$ of $\mathbb{P}^1\subset \mathbb{S}^1\subset\mathbb{R}^2$ we have
\begin{equation}\label{sp01}
\dot{\begin{pmatrix}
s_1 \\ s_2
\end{pmatrix}}=\begin{pmatrix}
(a_1+ub_1)s_1s_2^2+(a_2+ub_2)s_2^3-(a_3+ub_3)s_1^2s_2-(a_4+ub_4)s_1s_2^2 \\
-(a_1+ub_1)s_1^2s_2-(a_2+ub_2)s_1s_2^2+(a_3+ub_3)s_1^3+(a_4+ub_4)s_1^2s_2
\end{pmatrix}.
\end{equation}

\begin{obs} We introduce polar coordinates in \eqref{sp01} by the formulas $s_1=\cos \theta$ and  $s_2=\sin\theta,$ we directly get the unique differential equation
\begin{equation}\label{sp04}
\dot{\theta}=\left[ (a_4-a_1)+u(b_4-b_1) \right]\cos\theta\sin\theta-(a_2+ub_2)\sin^2\theta+(a_3+ub_3)\cos^2\theta.
\end{equation} If we define the expressions
\begin{equation}\label{sp05}
P=(a_2+a_3)+u(b_2+b_3), \hspace{0.1in} Q=(a_4-a_1)+u(b_4-b_1) \hspace{0.1in}\mbox{and}\hspace{0.1in} R=(a_3-a_2)+u(b_3-b_2),
\end{equation} which depend on $u,$ the equation \eqref{sp04} remains as 
\begin{equation}\label{sp06}
\dot{\theta}=\frac{1}{2} \left[ P\cos(2\theta)+Q\sin(2\theta)+R \right].
\end{equation}
The controllability problem of the induced angular system  $\mathbb{P}\Sigma$ on $\mathbb{P}^1,$ in terms of $\theta,$ is equivalent to the property that, for some control $u,$ there is a solution of \eqref{sp06}, which has an image diffeomorphic to $(-\pi/2,\pi/2].$ 
\end{obs}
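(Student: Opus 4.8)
The plan is to establish the three assertions of the Observation in order: the passage from \eqref{sp01} to \eqref{sp04}, the reduction of \eqref{sp04} to \eqref{sp06}, and the controllability equivalence, which is where the real content lies. For the first two I would parametrize $\mathbb{P}^1$ by the angle $\theta$, writing $s=(\cos\theta,\sin\theta)$ with unit tangent $s^{\perp}=(-\sin\theta,\cos\theta)$. Because the normalizing term $-(s^{\top}Ms)\,s$ in $h(M,s)=(M-s^{\top}Ms\,I_2)s$ is a scalar multiple of $s$, it is orthogonal to $s^{\perp}$ and contributes nothing to the angular speed; hence, writing $M=A+uB$,
\[
\dot\theta=\langle Ms,\,s^{\perp}\rangle .
\]
Expanding this inner product in the entries $a_i+ub_i$ of $M$ produces exactly \eqref{sp04}, and substituting the double-angle identities $\cos\theta\sin\theta=\tfrac12\sin(2\theta)$, $\sin^2\theta=\tfrac12(1-\cos(2\theta))$ and $\cos^2\theta=\tfrac12(1+\cos(2\theta))$, then collecting the coefficients of $\cos(2\theta)$, of $\sin(2\theta)$ and the constant term, identifies them with $P$, $Q$ and $R$ from \eqref{sp05} and yields \eqref{sp06}. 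Both steps are routine computations and I expect no difficulty.

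The substantive assertion is the controllability equivalence. Here I would use that $\mathbb{P}^1$ is a compact connected one-dimensional manifold, diffeomorphic to the circle and parametrized bijectively by the fundamental domain $(-\pi/2,\pi/2]$ under the antipodal identification $\theta\sim\theta+\pi$; in these coordinates a point of $\mathbb{P}^1$ is an angle modulo $\pi$, and a solution of \eqref{sp06} whose image is all of $(-\pi/2,\pi/2]$ is precisely a controlled trajectory that sweeps the entire projective line. For the implication from such a trajectory to controllability I would invoke time-invariance of the dynamics (the controls in $\mathcal{U}$ are locally constant and may be time-translated): a full-loop solution may be restarted at any prescribed point and then followed forward until it meets any prescribed target, giving reachability between arbitrary pairs of points. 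For the converse I would argue that controllability forces the reachable set from a fixed base point to be all of $\mathbb{P}^1$; since on the circle this reachable set grows as an arc, covering every point requires a single admissible trajectory that wraps once around, whose image is then the whole fundamental domain.

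The main obstacle I anticipate is precisely this converse and, more generally, making rigorous the passage between ``the image of a solution equals $(-\pi/2,\pi/2]$'' and a clean analytic condition. The degenerate case to rule out is the one in which every admissible trajectory is trapped by an equilibrium of \eqref{sp06}, that is, a zero of $P\cos(2\theta)+Q\sin(2\theta)+R$; the way I would close the argument is to show that a covering solution exists if and only if there is a control $u$ for which this trigonometric expression never vanishes, which is the analytic criterion that the subsequent theorem characterizes in terms of a complex eigenvalue of $A+uB$.
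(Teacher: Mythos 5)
Your derivation of \eqref{sp04} and \eqref{sp06} is correct and is exactly the computation the paper leaves implicit (``we directly get''): since the radial term $-(s^{\top}Ms)s$ of $h(M,s)$ is orthogonal to $s^{\perp}=(-\sin\theta,\cos\theta)$, one has $\dot\theta=\langle Ms,s^{\perp}\rangle$, and the double-angle identities produce $P$, $Q$, $R$ as in \eqref{sp05}. Be aware that the paper offers no proof of this Observation at all, so the only substantive issue is the controllability equivalence, and that is where your proposal has a genuine gap --- one you partly acknowledge, but whose repair you do not supply.

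The gap is the converse direction. From ``the reachable set from $\theta_0$ is all of $\mathbb{P}^1$'' you infer that ``covering every point requires a single admissible trajectory that wraps once around.'' This does not follow: the reachable set is a union of arcs through $\theta_0$, each arc being the image of one trajectory, and a priori it could be exhausted by some trajectories going clockwise and others going counterclockwise, none of which individually covers $\mathbb{P}^1$. The standard repair is to show first that controllability forces one of the two sign conditions: either at every $\theta$ some admissible value of $u$ makes the right-hand side of \eqref{sp06} strictly positive, or at every $\theta$ some value makes it strictly negative. Indeed, if there were a point $\theta_1$ at which every admissible velocity is $\le 0$ and a point $\theta_2$ at which every admissible velocity is $\ge 0$, then by a comparison argument any trajectory starting in the arc from $\theta_2$ to $\theta_1$ stays trapped in that closed arc, contradicting controllability. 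Given the sign condition, compactness of $\mathbb{P}^1$ yields finitely many control values and a piecewise-constant concatenation moving monotonically once around, which is the desired covering solution. Finally, your closing sentence quietly replaces ``some admissible, possibly switching, control produces a covering solution'' by ``some single value of $u$ makes $P\cos(2\theta)+Q\sin(2\theta)+R$ nowhere zero.'' These are not obviously equivalent --- one could imagine covering the circle by alternating between two controls each of which has an equilibrium --- and establishing that equivalence is precisely the job of the subsequent analysis leading to Theorem \ref{controlproy}, so it cannot be invoked inside the proof of this Observation without circularity.
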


In the following we will integrate the equation
\eqref{sp06}. We have
%\begin{equation}\label{sp07}
%\frac{2\dot{\theta}}{P\cos(2\theta)+Q\sin(2\theta)+R}=1,
%\end{equation} de donde conseguimos
\begin{equation*}\label{sp08}
\int_{t_0}^{t} \frac{2\dot{\theta}}{P\cos(2\theta)+Q\sin(2\theta)+R}dt=t-t_0;
\end{equation*} 
using the function $v=\tan\theta,$  this relation remains as
\begin{equation}\label{sp09}
\int_{v_0}^{v} \dfrac{2dv}{(R-P)v^2+2Qv+(R+P)}=t-t_0.
\end{equation} 
\begin{obs}\label{obscontrol} The controllability problem of the angular system  $\mathbb{P}\Sigma$ on $\mathbb{P}^1,$ in terms of $v,$ is equivalent to the property that, for some control $u,$ there is a solution of \eqref{sp09}, which has an image diffeomorphic to $(-\infty,+\infty)$ for $\theta(t)\neq\pi/2$ and $\lim_{\theta(t)\to\pi/2} v(t)=+\infty.$
\end{obs}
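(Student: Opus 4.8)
The plan is to read this observation as a change-of-variables statement, translating the controllability condition already recorded for \eqref{sp06} in terms of $\theta$ into one for \eqref{sp09} in terms of $v$. First I would recall the basic properties of the substitution $v=\tan\theta$: it is a strictly increasing diffeomorphism from the open interval $(-\pi/2,\pi/2)$ onto $(-\infty,+\infty)$, with $\tan\theta\to+\infty$ as $\theta\to\pi/2^-$ and $\tan\theta\to-\infty$ as $\theta\to-\pi/2^+$. Since the chart $\theta\in(-\pi/2,\pi/2]$ parametrizes $\mathbb{P}^1$ with the endpoints $-\pi/2$ and $\pi/2$ identified (they represent the same line through the origin), the single point $\theta=\pi/2$ is the only point of $\mathbb{P}^1$ not covered by this diffeomorphism, and it corresponds to the point at infinity of the $v$-line; adjoining that point to $\mathbb{R}$ recovers the circle $\mathbb{P}^1$.

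Next I would transport the controllability condition through this identification. On the set where $\theta(t)\neq\pi/2$ the map $v=\tan\theta$ is a diffeomorphism, so the image of a solution $\theta(t)$ is diffeomorphic to $(-\pi/2,\pi/2)$ precisely when the image of the associated $v(t)$ is diffeomorphic to $(-\infty,+\infty)$. The remaining endpoint $\theta=\pi/2$ is encoded exactly by the limiting requirement $\lim_{\theta(t)\to\pi/2}v(t)=+\infty$, which records that the trajectory reaches the point at infinity. Combining these two facts shows that a solution of \eqref{sp06} whose image is diffeomorphic to $(-\pi/2,\pi/2]$ --- that is, a trajectory of $\mathbb{P}\Sigma$ sweeping out all of $\mathbb{P}^1$ --- corresponds, under $v=\tan\theta$ and the integrated relation \eqref{sp09} already derived, to a solution of \eqref{sp09} with exactly the stated image property, and conversely.

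The hard part will be handling the pole of $\tan$ at $\theta=\pi/2$ so that the equivalence is genuinely bidirectional. Concretely, I must check that as a $\theta$-trajectory approaches the identified endpoint the corresponding $v$-trajectory escapes to $+\infty$ and re-enters from $-\infty$ in agreement with the identification $-\pi/2\sim\pi/2$, so that no arc of $\mathbb{P}^1$ is omitted or traversed twice. This reduces to comparing the one-sided limits $\lim_{\theta\to\pi/2^-}\tan\theta=+\infty$ and $\lim_{\theta\to-\pi/2^+}\tan\theta=-\infty$ and confirming that they glue to the single point at infinity. Once this gluing is verified, the claimed equivalence of the two controllability formulations follows immediately from the diffeomorphism property on the complement of that point.
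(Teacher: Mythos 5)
Your proposal is correct and matches the paper's intent exactly: the paper offers no separate proof of this observation, treating it as the immediate transport of the $\theta$-formulation (image diffeomorphic to $(-\pi/2,\pi/2]$) through the substitution $v=\tan\theta$, with the endpoint $\theta=\pi/2$ becoming the point at infinity — precisely the gluing you verify. Your write-up simply makes explicit the one-sided limits and the identification $-\pi/2\sim\pi/2$ that the paper leaves implicit.
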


\noindent {\bf Case 1.} We suppose that $R-P=0.$ We have two possibilities:

\noindent {\bf 1.1.} $Q\neq 0:$ the relation \eqref{sp09} remains as
\begin{equation*}
t-t_0=\int_{v_0}^{v} \dfrac{2dv}{2Qv+(R+P)}=\int_{v_0}^{v} \dfrac{dv}{Qv+P}=\dfrac{1}{Q}\ln \left( Qv+P \right) \Big|_{v_0}^v,
\end{equation*} therefore
\begin{equation*}
v(t)=\left(\frac{Qv_0+P}{Q}\right) e^{(t-t_0)Q}-\frac{P}{Q};
\end{equation*} 
in this case, the induced angular system $\mathbb{P}\Sigma$ does not controllable on $\mathbb{P}^1$ since the image of $v$ is of the form $(a,+\infty)$ or $(-\infty,a),$ for some $a\in\mathbb{R}.$

\noindent {\bf 1.2.} $Q=0:$ we have two cases 
\begin{enumerate}
\item $P\neq 0:$ the relation \eqref{sp09} remains as
\begin{equation*}
t-t_0=\int_{v_0}^{v} \dfrac{2dv}{(R+P)}=\int_{v_0}^{v} \dfrac{dv}{P}=\dfrac{1}{P}(v-v_0).
\end{equation*} 
We have in this case
$v(t)=P(t-t_0)+v_0,$
therefore, $\mathbb{P}\Sigma$ could be controllable on $\mathbb{P}^1;$ however, since
$ R-P=
%\left[ (a_3-a_2)+u(b_3-b_2)\right]-\left[(a_2+a_3)+u(b_2+b_3)\right]=
-2(a_2+ub_2)=0,$ for all $u\in\mathcal{U},$ we have $a_2=b_2=0;$ on the other hand, since $Q=0$ for all $u,$ we obtain $a_1=a_4$ and $b_1=b_4,$ thus 
\begin{equation*}
A=\begin{pmatrix}
a_1 & 0 \\ a_3 & a_1 
\end{pmatrix} 
\hspace{0.3in}\mbox{and}\hspace{0.3in}
B=\begin{pmatrix}
b_1 & 0 \\ b_3 & b_1 
\end{pmatrix}.
\end{equation*} 
We have that $[A,B]=0$ and
\begin{equation*}
\mbox{tr}^2(\mbox{adj}(A)B)-4\det(\mbox{adj}(A)B)=(2a_1b_1)^2-4(a_1b_1)(a_1b_1)=0,
\end{equation*} {\it i.e.,} the system $\Sigma$ does not satisfy LARC on $\mathbb{R}^2\smallsetminus\{0\}.$

\item $P=0:$ since $R=0,$ using \eqref{sp06} we have that $\theta$ is a constant, the angular system does not controllable on $\mathbb{P}^1.$
\end{enumerate}

\noindent {\bf Case 2.} We suppose that $R-P\neq 0.$ In this case, we can write
\begin{equation*}
(R-P)v^2+2Qv+(R+P)=(R-P)\left[ \left( v+\frac{Q}{R-P}\right)^2-\frac{P^2+Q^2-R^2}{(R-P)^2} \right].
\end{equation*}
We define the expression 
\begin{equation}\label{def_delta}
\Delta(u):=P^2+Q^2-R^2, \hspace{0.2in} u\in\mathbb{R}.
\end{equation}

\noindent {\bf 2.1.} $\Delta(u)=P^2+Q^2-R^2=0:$ the relation \eqref{sp09} remains as
\begin{equation*}
t-t_0=\int_{v_0}^{v} \dfrac{2dv}{(R-P)v^2+2Qv+(R+P)}= \dfrac{2}{(P-R)v-Q}\Big|_{v_0}^v;
\end{equation*} writing $C_1=\frac{2}{(P-R)v_0-Q},$ we have  
\begin{equation*}
v(t)=\left(\frac{1}{P-R}\right)\frac{2}{t-t_0+C_1}+ \frac{Q}{P-R};
\end{equation*} 
in this case, the angular system does not controllable on $\mathbb{P}^1$ since the image of $v$ has the form $(-\infty,a)\cup(a,+\infty)$ for some $a\in\mathbb{R}.$

\noindent {\bf 2.2.} $\Delta(u)=P^2+Q^2-R^2>0:$ the relation \eqref{sp09} remains as
\begin{equation*}
t-t_0=\int_{v_0}^{v} \dfrac{2dv}{(R-P)v^2+2Qv+(R+P)}= \dfrac{1}{\sqrt{\Delta(u)}}\ln\left( \dfrac{(R-P)v+Q-\sqrt{\Delta}(u)}{(R-P)v+Q+\sqrt{\Delta(u)}} \right)\Big|_{v_0}^v.
\end{equation*} Writing $C_3=\ln\left( \frac{(R-P)v+Q-\sqrt{\Delta(u)}}{(R-P)v+Q+\sqrt{\Delta(u)}} \right),$ we obtain
\begin{equation*} \displaystyle
v(t)=\left( \frac{\sqrt{\Delta(u)}}{R-P} \right) \frac{e^{C_3}+e^{(t-t_0)\sqrt{\Delta(u)}}}{e^{C_3}-e^{(t-t_0)\sqrt{\Delta(u)}}}-\frac{Q}{R-P};
\end{equation*}
as in the previous case, the system does not controllable on $\mathbb{P}^1$ since the image of $v$ has the form $(-\infty,a)\cup(a,\infty)$ for some $a\in\mathbb{R}.$

\noindent {\bf 2.3.} $\Delta(u)=P^2+Q^2-R^2<0:$ in this case, \eqref{sp09} remains as
\begin{equation*}
t-t_0=\int_{v_0}^{v} \dfrac{2dv}{(R-P)v^2+2Qv+(R+P)}= \dfrac{2}{\sqrt{-\Delta(u)}}\arctan \left( \dfrac{(R-P)v+Q}{\sqrt{-\Delta(u)}} \right)\Big|_{v_0}^v.
\end{equation*} Writing $C_2=\arctan \left( \frac{(R-P)v_0+Q}{\sqrt{-\Delta(u)}} \right),$ we get
\begin{equation*}
v(t)=\frac{\sqrt{-\Delta(u)}}{R-P}\tan \left( \sqrt{-\Delta(u)} \frac{(t-t_0)}{2}+C_2 \right)-\frac{Q}{R-P};
\end{equation*}
in this case, the angular system $\mathbb{P}\Sigma$ is controllable on $\mathbb{P}^1.$

\begin{obs} We note that, the condition $\Delta(u)<0$ for some $u\in\mathcal{U},$ implies $R-P\neq 0.$ In fact, if $R=P,$ we obtain $\Delta(u)=P^2+Q^2-R^2=Q^2\geq 0$ for all $u.$ 
\end{obs}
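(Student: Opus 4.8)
The plan is to prove the statement by contraposition: instead of deducing $R-P\neq 0$ from the existence of a control with $\Delta(u)<0$, I would assume the negation $R-P=0$ and show that it forces $\Delta(u)\geq 0$ for every control, so that no admissible $u$ can satisfy $\Delta(u)<0$.

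The key step is to substitute the hypothesis into the definition \eqref{def_delta}, namely $\Delta(u)=P^2+Q^2-R^2$. Reading $R-P=0$ as the pointwise equality $R=P$ of the $u$-dependent quantities introduced in \eqref{sp05}, we get $R^2=P^2$, so the two outer terms cancel and the expression collapses to $\Delta(u)=Q^2$. No information about the individual matrix entries $a_i,b_i$ is needed beyond this single relation; the cancellation is purely formal.

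Since $Q$ is a real (affine) function of $u$, its square is nonnegative, whence $\Delta(u)=Q^2\geq 0$ for all $u\in\mathbb{R}$, in particular for all $u\in\mathcal{U}$. This contradicts the assumed existence of a control with $\Delta(u)<0$ and thereby establishes the contrapositive. There is essentially no obstacle here: the statement is an immediate algebraic consequence of the definition of $\Delta(u)$. The only point deserving a moment's care is to interpret $R-P=0$ as an identity in $u$, so that the cancellation $P^2-R^2=0$ is legitimate; once that is observed, the nonnegativity of a square finishes the argument.
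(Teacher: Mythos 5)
Your proof is correct and is essentially identical to the paper's own argument: both substitute $R=P$ into $\Delta(u)=P^2+Q^2-R^2$ to get $\Delta(u)=Q^2\geq 0$ and conclude by contraposition. (Your worry about reading $R-P=0$ as an identity in $u$ is unnecessary: the cancellation works pointwise for whichever $u$ satisfies $R(u)=P(u)$, which is all the contrapositive requires.)
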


We obtain the following theorem.

\begin{teorema}\label{controlproy} We consider the bilinear control system $\Sigma,$ given in \eqref{sp}, on $\mathbb{R}^2\smallsetminus\{0\}$ assuming that it satisfies LARC. Then, the induced angular system $\mathbb{P}\Sigma,$ given in \eqref{sp0}, is controllable on the projective space $\mathbb{P}^1$ if and only if there exists $u\in\mathbb{R}$ such that $$\Delta(u)=P^2+Q^2-R^2<0.$$ 
\end{teorema}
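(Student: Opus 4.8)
The plan is to read the theorem directly off the explicit integration of \eqref{sp09} already carried out in Cases 1 and 2, using the controllability criterion recorded in Observation \ref{obscontrol}: the induced angular system $\mathbb{P}\Sigma$ is controllable on $\mathbb{P}^1$ precisely when, for some control $u$, the corresponding solution $v(t)$ of \eqref{sp09} has image equal to $(-\infty,+\infty)$ (for $\theta(t)\neq\pi/2$) together with $v(t)\to+\infty$ as $\theta(t)\to\pi/2$. Since $v=\tan\theta$, this amounts to asking that a single trajectory $\theta(t)$ sweep the whole arc $(-\pi/2,\pi/2]$ and wrap through the direction at infinity. I would therefore structure the proof as the two implications of the equivalence and, in each case, simply inspect the shape of the image of the relevant solution formula.

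For the sufficiency, suppose $\Delta(u)<0$ for some $u$. I first note that necessarily $R-P\neq 0$, since $R=P$ would force $\Delta(u)=P^2+Q^2-R^2=Q^2\geq 0$; hence Case 2.3 applies. There the solution is $v(t)=\tfrac{\sqrt{-\Delta(u)}}{R-P}\tan\!\big(\sqrt{-\Delta(u)}\,\tfrac{t-t_0}{2}+C_2\big)-\tfrac{Q}{R-P}$, and because $\tan$ is periodic and surjective onto $\mathbb{R}$, over a single period $v(t)$ already runs through all of $\mathbb{R}$ while $\theta$ crosses $\pi/2$. Thus one trajectory covers $\mathbb{P}^1$, the criterion of Observation \ref{obscontrol} holds, and $\mathbb{P}\Sigma$ is controllable.

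For the necessity I argue by contraposition: assume $\Delta(u)\geq 0$ for every $u$ and fix an arbitrary control $u$. If $R-P\neq 0$, then either $\Delta(u)=0$ (Case 2.1) or $\Delta(u)>0$ (Case 2.2), and in both the image of the solution is of the form $(-\infty,a)\cup(a,+\infty)$; the omitted value $a$ corresponds to a direction $\theta=\arctan a$ that is never attained, so $\mathbb{P}^1$ is not covered. If instead $R-P=0$, then Case 1.1 ($Q\neq 0$) gives an exponential whose image is only a half-line, while Case 1.2 with $Q=0,\,P=0$ leaves $\theta$ constant; neither sweeps all of $\mathbb{P}^1$. The single remaining subcase, $R-P=Q=0$ with $P\neq 0$, is delicate and is treated in the next paragraph; apart from it, no control yields a trajectory covering $\mathbb{P}^1$, so by Observation \ref{obscontrol} the system $\mathbb{P}\Sigma$ is not controllable.

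The genuinely delicate step — and the one where the standing LARC hypothesis is indispensable — is precisely the borderline subcase $R-P=Q=0$, $P\neq 0$ of Case 1.2. Its solution is the affine map $v(t)=P(t-t_0)+v_0$, whose image is all of $(-\infty,+\infty)$ with $v\to+\infty$ as $\theta\to\pi/2$; this would satisfy the controllability criterion even though $\Delta(u)=Q^2=0\geq 0$, seemingly contradicting the theorem. The resolution is that this configuration is incompatible with LARC: here $A$ and $B$ are lower-triangular with equal diagonal entries, so $[A,B]=0$ and $\mathrm{tr}^2(\mathrm{adj}(A)B)-4\det(\mathrm{adj}(A)B)=0$, whence Theorem \ref{LARC_TEO1} shows $\Sigma$ does not satisfy LARC. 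Under our standing assumption this subcase therefore cannot occur and is excluded, leaving only non-controllable possibilities and completing the contrapositive. All the rest is a routine reading of the integrations already performed; the real content of the proof is this interaction between the LARC condition and the sign of $\Delta(u)$.
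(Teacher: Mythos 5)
Your proposal is correct and follows essentially the same route as the paper: the paper's proof of Theorem \ref{controlproy} is just a pointer back to the integration of \eqref{sp09} carried out in Cases 1 and 2 together with the criterion of Observation \ref{obscontrol}, which is exactly the case analysis you reproduce, organized into the two implications. In particular, your resolution of the borderline subcase $R-P=Q=0$, $P\neq 0$ by invoking the standing LARC hypothesis and Theorem \ref{LARC_TEO1} is precisely the paper's treatment of Case 1.2(1).
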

\begin{proof} According to Observation \ref{obscontrol}, the controllability of $\mathbb{P}\Sigma$ on $\mathbb{P}^1$ depends of the solutions of \eqref{sp09}; the previous analysis (Cases 1 and 2) allows us to deduce that the condition required for the solutions of \eqref{sp09} is satisfied only if $\Delta(u)<0,$ for some $u\in\mathbb{R}.$
\end{proof}

The following observation relates the previous theorem with the criterion of controllability of the induced angular system $\Sigma\mathbb{P}$ on $\mathbb{P}^1$ given in \cite[Theorem 3.3]{C}.

\begin{obs}[Eigenvalues of $A+uB$]\label{delta} We recall the definition of $P, Q$ and $R,$ given in \eqref{sp05}. We consider $S:=(a_4+a_1)+u(b_4+b_1);$ clearly we have
\begin{equation*}
A+uB=\begin{pmatrix}
a_1+ub_1 & a_2+ub_2 \\
a_3+ub_3 & a_4+ub_4
\end{pmatrix}=\dfrac{1}{2}\begin{pmatrix}
S-Q & P-R \\
P+R & S+Q
\end{pmatrix},
\end{equation*} thus we obtain
\begin{equation*}
\mbox{tr}(A+uB)=S \hspace{0.3in}\mbox{and}\hspace{0.3in} \det(A+uB)=\frac{1}{4} \left[ S^2+\left( R^2-P^2-Q^2 \right) \right];
\end{equation*} therefore
\begin{equation*}
\mbox{tr}^2(A+uB)-4\det(A+uB)=P^2+Q^2-R^2 = \Delta(u);
\end{equation*} 
{\it i.e.,} there exists $u\in\mathbb{R}$ such that $\Delta(u)<0$ if and only if there exists $u\in\mathbb{R}$ such that the matrix $A+uB$ has a complex eigenvalue.
\end{obs}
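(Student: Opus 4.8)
The plan is to establish the statement purely by direct computation, since the three displayed identities are the substance of the claim and the final equivalence follows at once from a standard discriminant criterion. First I would verify the rewriting of $A+uB$ in terms of $P,Q,R,S$. Recalling the definitions of $P,Q,R$ in \eqref{sp05} and of $S$, I would form the four combinations $S-Q$, $P-R$, $P+R$, $S+Q$ and check that each is twice the corresponding entry of $A+uB$; for instance $S-Q=(a_4+a_1)-(a_4-a_1)+u[(b_4+b_1)-(b_4-b_1)]=2(a_1+ub_1)$, and likewise $P-R=2(a_2+ub_2)$, $P+R=2(a_3+ub_3)$, and $S+Q=2(a_4+ub_4)$. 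This justifies the matrix decomposition displayed in the statement.

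Next I would read off the trace and determinant from this decomposition. The trace is the sum of the diagonal entries, $\frac{1}{2}[(S-Q)+(S+Q)]=S$. For the determinant I would apply the $2\times2$ formula together with a difference of squares:
\[
\det(A+uB)=\frac{1}{4}\left[(S-Q)(S+Q)-(P-R)(P+R)\right]=\frac{1}{4}\left[S^2-Q^2-P^2+R^2\right]=\frac{1}{4}\left[S^2+(R^2-P^2-Q^2)\right].
\]
Substituting these into the discriminant of the characteristic polynomial yields $\mbox{tr}^2(A+uB)-4\det(A+uB)=S^2-\left[S^2+R^2-P^2-Q^2\right]=P^2+Q^2-R^2=\Delta(u)$, which is exactly the third displayed identity.

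Finally I would conclude the equivalence by recalling the standard discriminant criterion for a $2\times2$ matrix $M$: its characteristic polynomial is $\lambda^2-\mbox{tr}(M)\lambda+\det(M)$, whose discriminant equals $\mbox{tr}^2(M)-4\det(M)$; hence $M$ has a non-real (complex) eigenvalue precisely when this quantity is negative. Applying this with $M=A+uB$ and the identity just proved, the matrix $A+uB$ has a complex eigenvalue if and only if $\Delta(u)<0$; quantifying over $u\in\mathbb{R}$ gives the asserted equivalence.

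Since every step is a routine algebraic verification, there is no genuine obstacle. The only points requiring care are the bookkeeping in expanding the combinations $S\pm Q$ and $P\pm R$ so that the factors $\frac{1}{2}$ and $\frac{1}{4}$ are tracked correctly, and the (standard) recognition that the sign of $\mbox{tr}^2-4\det$ is precisely what distinguishes real from complex eigenvalues of a $2\times2$ matrix.
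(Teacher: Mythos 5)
Your proposal is correct and follows essentially the same route as the paper: the same rewriting of $A+uB$ in terms of $P,Q,R,S$, the same computation of $\mbox{tr}(A+uB)=S$ and $\det(A+uB)=\frac{1}{4}\left[S^2+(R^2-P^2-Q^2)\right]$, and the same discriminant criterion identifying $\Delta(u)<0$ with $A+uB$ having a complex eigenvalue. You merely carry out the entrywise checks (e.g.\ $S-Q=2(a_1+ub_1)$) more explicitly than the paper does, which leaves them implicit.
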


\begin{obs}\label{obs1}
According the previous observation and Theorem \ref{controlproy}, the controllability problem of the induced angular system $\mathbb{P}\Sigma$ on the projective space $\mathbb{P}^1$ is characterized as follows: with the same hypothesis of Theorem \ref{controlproy}, the induced angular system $\mathbb{P}\Sigma$ is controllable on $\mathbb{P}^1$ if and only if there exists $u\in\mathbb{R}$ such that $A+uB$ has a complex eigenvalue. The same characterization was proved in \cite[Theorem 3.3]{C} but using a different technique; in our proof, the technique is elementary.
\end{obs}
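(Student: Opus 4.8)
The plan is to reduce controllability of $\mathbb{P}\Sigma$ to a statement about the image of the integrated trajectory, and then to read that statement off the explicit quadrature. I would begin from the reduction recorded in Observation \ref{obscontrol}: $\mathbb{P}\Sigma$ is controllable on $\mathbb{P}^1$ precisely when, for some control $u$, a solution of \eqref{sp09} has image diffeomorphic to $(-\infty,+\infty)$ with $v=\tan\theta$ blowing up as $\theta\to\pi/2$. Since $v=\tan\theta$ identifies $\mathbb{P}^1$ with the one-point compactification of the $v$-line, this blow-up requirement is exactly the analytic encoding of the demand that the angle $\theta$ sweep once around the circle $\mathbb{P}^1$; keeping this identification in mind throughout is essential.

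Next I would integrate \eqref{sp09}, splitting on the leading coefficient $R-P$ of the denominator and, when $R-P\neq 0$, on the sign of $\Delta(u)=P^2+Q^2-R^2$ from \eqref{def_delta}; this is the case division already laid out in Cases 1.1 through 2.3. The content I want to extract is that $\Delta(u)$ controls the qualitative shape of the image of $v(t)$. When $R-P=0$ with $Q\neq 0$ (Case 1.1) the solution is exponential and its image is a half-line; when $R-P\neq 0$ and $\Delta(u)\geq 0$ (Cases 2.1, 2.2) the primitive is rational or logarithmic and the image is a line with one point deleted, so a whole direction of $\mathbb{P}^1$ is missed. The only remaining branch with full image is the degenerate affine case $R-P=Q=0$, $P\neq 0$ (Case 1.2(1)), and this is excluded by the standing LARC hypothesis, since there $A$ and $B$ are lower triangular with equal diagonal entries and hence share the eigenvector $(0,1)^\top$, so no pair in $\mathcal{L}_\Sigma$ can be pointwise independent at that direction and Theorem \ref{LARC_TEO1} fails. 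This leaves Case 2.3, $\Delta(u)<0$, where the primitive is an $\arctan$ and the solution is the tangent $v(t)=\frac{\sqrt{-\Delta(u)}}{R-P}\tan\left(\sqrt{-\Delta(u)}\,\frac{t-t_0}{2}+C_2\right)-\frac{Q}{R-P}$, whose image is all of $(-\infty,+\infty)$ with the prescribed blow-ups; equivalently $\dot\theta$ keeps a constant sign and $\theta$ winds monotonically around $\mathbb{P}^1$. Choosing that constant control establishes the sufficiency: if $\Delta(u)<0$ for some $u$, then $\mathbb{P}\Sigma$ is controllable.

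For the converse I would argue contrapositively, assuming $\Delta(u)\geq 0$ for every $u\in\mathbb{R}$. The observation that makes this tractable is that $\Delta(u)<0$ is exactly the condition that the constant field in \eqref{sp06} be nowhere vanishing, since its oscillatory part has amplitude $\sqrt{P^2+Q^2}$ which is then dominated by $|R|$; thus $\Delta(u)\geq 0$ for all $u$ forces every constant field to possess an equilibrium on $\mathbb{P}^1$. The main obstacle lies precisely here, in the necessity direction: the quadrature only describes one constant control at a time, whereas an admissible control in $\mathcal{U}$ is a concatenation of such pieces, so I must rule out that switching among controls, each having $\Delta(u)\geq 0$, manufactures a trajectory that escapes every single-control trap and closes around the circle. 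The way I would close this gap is to note that the $B$-part of the angular field vanishes at only finitely many directions of $\mathbb{P}^1$, at which the velocity is forced and control-independent; a sign bookkeeping of these forced velocities shows they cannot all point in a consistent rotational sense, which produces a closed arc that is forward invariant under the entire family and hence under every switched trajectory. Establishing the existence of this common invariant arc — and checking it against the $\theta=\pi/2$ identification so that no winding is hidden in the gluing — is the delicate step, and it is where I expect to spend most of the effort.
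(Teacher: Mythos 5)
Your proposal stops one step short of the statement it is supposed to prove, and the step you do attempt in full generality is left unfinished. In the paper, Observation \ref{obs1} is a two-line consequence of results already in hand: Theorem \ref{controlproy} reduces controllability of $\mathbb{P}\Sigma$ to the existence of $u$ with $\Delta(u)<0$, and Observation \ref{delta} supplies the identity $\Delta(u)=\mbox{tr}^2(A+uB)-4\det(A+uB)$, which is what converts ``$\Delta(u)<0$'' into ``$A+uB$ has a complex eigenvalue.'' Your argument terminates at ``controllable if and only if $\Delta(u)<0$ for some $u$,'' which is Theorem \ref{controlproy} verbatim, not the observation. The missing bridge is the short computation with $S=(a_4+a_1)+u(b_4+b_1)$ giving $\mbox{tr}(A+uB)=S$ and $\det(A+uB)=\frac{1}{4}\left[S^2+R^2-P^2-Q^2\right]$, hence $\mbox{tr}^2(A+uB)-4\det(A+uB)=P^2+Q^2-R^2=\Delta(u)$; without it you have proved a different (already available) statement.

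The more substantive gap is in the necessity direction of the part you chose to reprove. You are right that the quadrature of \eqref{sp09} governs only one constant control at a time, so ``$\Delta(u)\geq 0$ for every $u$'' only says that each individual constant field in \eqref{sp06} has an equilibrium on $\mathbb{P}^1$, and that alone does not preclude controllability by switching: two fields on the circle, each vanishing at a single point but positive elsewhere, generate a controllable system, and fields of exactly that shape occur in this family when $\Delta(u)=0$. Your proposed repair --- look at the control-independent velocities at the zeros of the $B$-part of the angular field and claim they ``cannot all point in a consistent rotational sense'' --- identifies the right objects but asserts the conclusion rather than deriving it from the hypothesis $\Delta(u)\geq 0$ for all $u$; one still has to show that these forced velocities take opposite signs at the (at most two) such zeros, which amounts to relating their product to $\det[A,B]=-\frac{1}{16}(\beta^2-4\alpha\gamma)$, i.e.\ to the discriminant computed in Lemma \ref{lemadiscriminante}, and you explicitly defer this. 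As written the necessity direction is a plan, not a proof. Within the logic of the paper the intended route is simply to cite Theorem \ref{controlproy} and append the eigenvalue identity above; if you prefer an independent derivation, you must actually construct the positively invariant proper arc (or common equilibrium) from $\Delta(u)\geq 0$ for all $u$.
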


\begin{ejemplo}\label{ej1} The bilinear control system, on $\mathbb{R}^2\smallsetminus\{0\},$ given by
\begin{equation*}
\dot{x}=\begin{pmatrix}
2 & -1 \\ 0 & 1
\end{pmatrix}x + u \begin{pmatrix}
0 & 1 \\ -1 & 0
\end{pmatrix}x, 
\end{equation*} satisfies LARC; in fact, we easily get
\begin{equation}\label{ej3_1}
\mbox{tr}^2(\mbox{adj}(A)B)-4\det(\mbox{adj}(A)B)=\mbox{tr}^2\begin{pmatrix}
-1 & 1 \\ -2 & 0
\end{pmatrix}-4\det\begin{pmatrix}
-1 & 1 \\ -2 & 0
\end{pmatrix}=-7 <0.
\end{equation} Moreover, the induced angular system $\mathbb{P}\Sigma$ is controllable on $\mathbb{P}^1;$ in fact, we have
\begin{align*}
\Delta(u) =\mbox{tr}^2(A+uB)-4\det(A+uB) &=\mbox{tr}^2\begin{pmatrix}
2 & u-1 \\ -u & 1
\end{pmatrix} -4\det\begin{pmatrix}
2 & u-1 \\ -u & 1
\end{pmatrix} \\ 
&=-4u^2+4u+1;
\end{align*} thus $\Delta(u)<0,$ for every constant control $u\in \left(-\infty, \frac{1-\sqrt{2}}{2}\right)\cup \left(\frac{1+\sqrt{2}}{2},+\infty\right).$
\end{ejemplo}

\subsection{Conditions on $A$ and $B$ for $\Delta(u)<0$}

In this section we will describe conditions on $A$ and $B,$ such that $\Delta(u)<0,$ for some $u\in\mathbb{R}.$ 

Using the relation \eqref{detf1} and Observation \ref{delta} we have
\begin{align*}
\Delta(u) &=\mbox{tr}^2(A+uB)-4\det(A+uB) \\
%&= \left( \mbox{tr}(A)+u\mbox{tr}(B) \right)^2-
%4\left[ \det A + \left( \mbox{tr}(A)\mbox{tr}(B)-\mbox{tr}(AB) \right)u+ (\det B)u^2 \right] \\
&= \left[ \mbox{tr}^2(B)-4\det B\right]u^2 + 2 \left[ 2\mbox{tr}(AB)-\mbox{tr}(A)\mbox{tr}(B)\right] u+\left[ \mbox{tr}^2(A)-4\det A\right] ;
\end{align*} writing
\begin{align}\label{abg}
\alpha:=\left[ \mbox{tr}(B)^2-4\det(B) \right],\hspace{0.1in} \beta:= 2 \left[ 2\mbox{tr}(AB)-\mbox{tr}(A)\mbox{tr}(B)\right], \hspace{0.1in} \gamma:=\left[ \mbox{tr}^2(A)-4\det A\right],
\end{align} we get the expression
\begin{equation}\label{delta1}
\Delta(u)=\alpha u^2+ \beta u+ \gamma, \hspace{0.3in} u\in \mathbb{R}.
\end{equation} 
We note that, according to \eqref{foradj}, we have
\begin{equation*}
\beta=0\hspace{0.2in} \Leftrightarrow \hspace{0.2in} 2\mbox{tr}(AB)=\mbox{tr}(A)\mbox{tr}(B) \hspace{0.2in} \Leftrightarrow \hspace{0.2in} \mbox{tr}(\mbox{adj}(A)B)=\mbox{tr}(AB).
\end{equation*}
%\begin{align*}
%P^2+Q^2-R^2 &=\left[(a_2+a_3)+u(b_2+b_3)\right]^2+\left[ (a_4-a_1)+u(b_4-b_1) \right]^2 -\left[ (a_3-a_2)+u(b_3-b_2) \right]^2 \\
%&= \alpha u^2+\beta u + \gamma,
%%&= \left[ (b_3-b_2)^2-(b_2+b_3)^2-(b_4-b_1)^2 \right] u^2+\left[ \right] u + \left[ (a_3-a_2)^2-(a_2+a_3)^2-(a_4-a_1)^2 \right] \\
%%&= -\left[ (b_4+b_1)^2-4(b_1b_4-b_2b_3) \right] u^2+\left[ \right] u  -\left[ (b_4+b_1)^2-4(b_1b_4-b_2b_3) \right] \\
%%&= -\left[ \mbox{tr}(B)^2-4\det(B) \right] u^2+\left[ \right] u -\left[ \mbox{tr}(A)^2-4\det(A) \right],
%\end{align*} una ecuación cuadrática en la variable $u,$ cuyos coeficientes $\alpha, \beta$ y $\gamma$ están dados por las siguientes expresiones
%\begin{align*}
%\alpha = (b_2+b_3)^2+(b_4-b_1)^2-(b_3-b_2)^2 = \left[ (b_4+b_1)^2-4(b_1b_4-b_2b_3) \right] = \left[ \mbox{tr}(B)^2-4\det(B) \right],
%\end{align*}
%\begin{align*}
%\gamma = (a_2+a_3)^2+(a_4-a_1)^2-(a_3-a_2)^2 = \left[ (a_4+a_1)^2-4(a_1a_4-a_2a_3) \right]= \left[ \mbox{tr}(A)^2-4\det(A) \right],
%\end{align*} y donde
%\begin{align*}
%\beta &= 2(a_2+a_3)(b_2+b_3)-2(a_3-a_2)(b_3-b_2)+2(a_4-a_1)(b_4-b_1)^2 \\
%&= 2\left[ 2(a_1b_1+a_2b_3)+2(a_3b_2+a_4b_4)-(a_1+a_4)(b_1+b_4)  \right] \\
%&= 2\left[ 2\mbox{tr}(AB)-\mbox{tr}(A)\mbox{tr}(B) \right].
%\end{align*}
We will determine conditions on the coefficients $\alpha, \beta$ and $\gamma,$ such that $\Delta(u)< 0,$ for some $u\in\mathbb{R}.$ We need the following lemma.

\begin{lema}\label{lemadiscriminante} The discriminant of $\Delta(u)=\alpha u^2+\beta u+\gamma=0,$ with $u\in\mathbb{R},$ is given by
\begin{equation}\label{discriminante}
\beta^2-4\alpha\gamma=-16\det[A,B].
\end{equation}
\end{lema}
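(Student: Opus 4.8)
The plan is to recognize \eqref{discriminante} as a polynomial identity in the entries of $A$ and $B$, and to prove it by reducing both sides to the same symmetric functions of the matrices. There are two natural routes: a brute-force expansion in the eight coordinates $a_i,b_i$, and a coordinate-free reduction to the trace invariants $\mbox{tr}(A)$, $\mbox{tr}(B)$, $\mbox{tr}(AB)$, $\det A$, $\det B$. I would carry out the second, since it keeps the bookkeeping under control and reuses the Cayley--Hamilton identity $A^2=\mbox{tr}(A)A-(\det A)I_2$ already invoked for \eqref{detf1}; the first route serves as an independent check.

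First I would rewrite the right-hand side. Since the commutator $[A,B]$ is traceless, Cayley--Hamilton gives $[A,B]^2=-\det[A,B]\,I_2$, hence $\det[A,B]=-\tfrac{1}{2}\mbox{tr}\big([A,B]^2\big)$. Expanding $[A,B]^2=(AB-BA)^2$ and applying the cyclic invariance of the trace collapses the four terms into $\mbox{tr}\big([A,B]^2\big)=2\,\mbox{tr}\big((AB)^2\big)-2\,\mbox{tr}(A^2B^2)$, so that
\begin{equation*}
-16\det[A,B]=16\big[\mbox{tr}\big((AB)^2\big)-\mbox{tr}(A^2B^2)\big].
\end{equation*}
I would then eliminate the quartic traces using Cayley--Hamilton: writing $A^2=\mbox{tr}(A)A-(\det A)I_2$ and $B^2=\mbox{tr}(B)B-(\det B)I_2$, multiplying and taking traces yields $\mbox{tr}(A^2B^2)=\mbox{tr}(A)\mbox{tr}(B)\mbox{tr}(AB)-\mbox{tr}^2(A)\det B-\mbox{tr}^2(B)\det A+2\det A\det B$, while $\mbox{tr}\big((AB)^2\big)=\mbox{tr}^2(AB)-2\det A\det B$ from $\mbox{tr}(C^2)=\mbox{tr}^2(C)-2\det C$ applied to $C=AB$.

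For the left-hand side I would substitute the definitions \eqref{abg}, recording $\alpha=2\mbox{tr}(B^2)-\mbox{tr}^2(B)$ and $\gamma=2\mbox{tr}(A^2)-\mbox{tr}^2(A)$ via $\det=\tfrac12(\mbox{tr}^2-\mbox{tr}(\cdot^2))$, and $\beta=4\mbox{tr}(AB)-2\mbox{tr}(A)\mbox{tr}(B)$, and then expand $\beta^2-4\alpha\gamma$. The terms in $\mbox{tr}^2(A)\mbox{tr}^2(B)$ cancel, and after matching against the trace expression for $-16\det[A,B]$ the pieces carrying $\mbox{tr}^2(AB)$ and $\mbox{tr}(A)\mbox{tr}(B)\mbox{tr}(AB)$ agree on the nose; what remains is an identity purely in $\mbox{tr}^2(A),\mbox{tr}(A^2),\mbox{tr}^2(B),\mbox{tr}(B^2)$, which closes upon re-inserting $\det A=\tfrac12(\mbox{tr}^2(A)-\mbox{tr}(A^2))$ and the analogue for $B$.

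The computation is entirely routine, so the only real obstacle is organizational: arranging the expansion so that the non-invariant cross terms cancel transparently rather than by miracle. The conceptual crux is the traceless-commutator identity $\det[A,B]=\mbox{tr}(A^2B^2)-\mbox{tr}\big((AB)^2\big)$, which converts the geometric quantity $\det[A,B]$ into the same trace data that governs $\alpha,\beta,\gamma$. As a safeguard I would also verify \eqref{discriminante} directly in coordinates, introducing $p:=a_1-a_4$ and $q:=b_1-b_4$ so that $\alpha=q^2+4b_2b_3$, $\gamma=p^2+4a_2a_3$, and $\beta=2\big(pq+2a_2b_3+2a_3b_2\big)$; computing the entries of $[A,B]$ and forming $c^2+de$ for its off-diagonal/diagonal data then matches $\tfrac{1}{16}(\beta^2-4\alpha\gamma)$ term by term, confirming the identity.
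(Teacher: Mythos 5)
Your proposal is correct and follows essentially the same route as the paper: both reduce $\beta^2-4\alpha\gamma$ and $-16\det[A,B]$ to the invariants $\mbox{tr}(A),\mbox{tr}(B),\mbox{tr}(AB),\det A,\det B$, with the identity $\mbox{tr}(A^2B^2)=\mbox{tr}(A)\mbox{tr}(B)\mbox{tr}(AB)-\mbox{tr}^2(A)\det B-\mbox{tr}^2(B)\det A+2\det(AB)$ from Cayley--Hamilton doing the real work. The only (cosmetic) difference is how $\det[A,B]$ is converted to trace data: you use $\det C=-\tfrac12\mbox{tr}(C^2)$ for the traceless commutator together with the expansion of $(AB-BA)^2$, whereas the paper applies its determinant formula \eqref{detf1} with $r=-1$ to $\det(AB-BA)$; both land on the same intermediate expression.
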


\begin{proof} The discriminant of  $\alpha u^2+\beta u+\gamma=0,$ with $u\in\mathcal{U},$  is given by
\begin{align*}
\beta^2-4\alpha\gamma &=4\left[ 2\mbox{tr}(AB)-\mbox{tr}(A)\mbox{tr}(B) \right]^2-4\left[ \mbox{tr}(B)^2-4\det(B) \right]\left[ \mbox{tr}(A)^2-4\det(A) \right] \\
&= 16 \left[ \mbox{tr}^2(AB)-\mbox{tr}(A)\mbox{tr}(B)\mbox{tr}(AB)+\mbox{tr}^2(B)\det(A)+\mbox{tr}^2(A)\det(B)-4\det(AB) \right].
\end{align*} 
Using the relation \eqref{detf1}, with $r=-1,$ we have
%\begin{equation*}
%\det(A+B)=\det A+\left[ \mbox{tr}(A)\mbox{tr}(B)-\mbox{tr}(AB) \right]+\det B;
%\end{equation*} en particular, 
\begin{align*}
\det[A,B]
%&=\det(AB)+\left[ \mbox{tr}(AB)\mbox{tr}(BA)-\mbox{tr}((AB)(BA)) \right](-1)+\det(BA)(-1)^2 \\
= 2\det(AB)-\mbox{tr}^2(AB)+\mbox{tr}(A^2B^2);
\end{align*}
of Cayley-Hamilton Theorem we get $A^2=\mbox{tr}(A)A-\det(A)I_2$ and $B^2=\mbox{tr}(B)B-\det(B)I_2,$ therefore
\begin{equation*}
\mbox{tr}(A^2B^2) 
%&=\mbox{tr}\left[ \left(\mbox{tr}(A)A-\det(A)I_2 \right)\left(\mbox{tr}(B)B-\det(B)I_2 \right) \right] \\
%&= \mbox{tr}\left[ \mbox{tr}(A)\mbox{tr}(B)AB-\mbox{tr}(A)\det(B)A-\mbox{tr}(B)\det(A)B +\det(AB)I_2  \right] \\
= \mbox{tr}(A)\mbox{tr}(B)\mbox{tr}(AB)-\mbox{tr}^2(A)\det(B)-\mbox{tr}^2(B)\det(A) +2\det(AB),
\end{equation*} 
thus, finally we get
\begin{align*}
\det[A,B]
%&= 4\det(AB)-\mbox{tr}^2(AB)+ \mbox{tr}(A)\mbox{tr}(B)\mbox{tr}(AB)-\mbox{tr}^2(A)\det(B)-\mbox{tr}^2(B)\det(A) \\
&=-\left[ \mbox{tr}^2(AB)- \mbox{tr}(A)\mbox{tr}(B)\mbox{tr}(AB)+\mbox{tr}^2(A)\det(B)+\mbox{tr}^2(B)\det(A)-4\det(AB) \right],
\end{align*}  as we expected.
\end{proof}

\begin{obs} If $\alpha\neq 0,$ we can write
\begin{equation}\label{expdelta1}
\Delta(u)=\alpha u^2+\beta u+\gamma = \alpha \left( \left(u+\dfrac{\beta}{2\alpha}\right)^2-\dfrac{\beta^2-4\alpha\gamma}{4\alpha^2} \right),
\end{equation}
thus, when considering the constant control $u=-\frac{\beta}{2\alpha},$ the maximum value (resp. minimum value) of $\Delta(u),$ if $\alpha<0$ (resp. if $\alpha>0$), is given by
\begin{equation*}
\Delta\left(-\frac{\beta}{2\alpha}\right)=\frac{4\det[A,B]}{\alpha}.
\end{equation*}
\end{obs}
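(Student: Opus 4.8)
The plan is to complete the square on the quadratic $\Delta(u)=\alpha u^2+\beta u+\gamma$ and then read off the value at the vertex. First I would factor out $\alpha$, which is legitimate since $\alpha\neq 0$, and complete the square in the standard way to obtain the identity \eqref{expdelta1}. This is a purely algebraic manipulation with no subtlety: the point is simply that $\alpha u^2+\beta u+\gamma=\alpha\bigl(u^2+\tfrac{\beta}{\alpha}u\bigr)+\gamma=\alpha\bigl(u+\tfrac{\beta}{2\alpha}\bigr)^2-\tfrac{\beta^2}{4\alpha}+\gamma$, and collecting the constant terms over $4\alpha^2$ reproduces the stated form. The key structural observation is that $\bigl(u+\tfrac{\beta}{2\alpha}\bigr)^2$ is nonnegative and vanishes precisely at $u=-\tfrac{\beta}{2\alpha}$, so this is the unique critical point of the parabola.

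Next I would evaluate $\Delta$ at this critical point. Setting the squared term to zero in \eqref{expdelta1} gives $\Delta\bigl(-\tfrac{\beta}{2\alpha}\bigr)=-\tfrac{\beta^2-4\alpha\gamma}{4\alpha}$. At this stage I would invoke Lemma \ref{lemadiscriminante}, which identifies the discriminant as $\beta^2-4\alpha\gamma=-16\det[A,B]$. Substituting this relation yields $\Delta\bigl(-\tfrac{\beta}{2\alpha}\bigr)=\tfrac{16\det[A,B]}{4\alpha}=\tfrac{4\det[A,B]}{\alpha}$, which is exactly the claimed formula.

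Finally, the maximum/minimum characterization follows immediately from the sign of the leading coefficient. Since the factored form \eqref{expdelta1} shows that the graph of $\Delta$ is a parabola opening in the direction determined by $\alpha$, the value at the vertex $u=-\tfrac{\beta}{2\alpha}$ is a global minimum when $\alpha>0$ and a global maximum when $\alpha<0$; in either case $\Delta$ attains its extreme value there, because $\bigl(u+\tfrac{\beta}{2\alpha}\bigr)^2\geq 0$ forces $\alpha\bigl(u+\tfrac{\beta}{2\alpha}\bigr)^2$ to have the sign of $\alpha$ for $u\neq-\tfrac{\beta}{2\alpha}$. I do not expect any genuine obstacle here: the only non-elementary input, namely the expression of the discriminant in terms of $\det[A,B]$, has already been established in Lemma \ref{lemadiscriminante}, so once that identity is available the present statement reduces to completing the square and substituting.
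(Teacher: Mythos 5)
Your proposal is correct and follows exactly the route the paper intends: the observation is stated without proof precisely because it amounts to the completion of the square already displayed in \eqref{expdelta1}, evaluation at the vertex $u=-\beta/(2\alpha)$, and substitution of the discriminant identity $\beta^2-4\alpha\gamma=-16\det[A,B]$ from Lemma \ref{lemadiscriminante}. The sign analysis via the leading coefficient $\alpha$ is likewise the standard argument the authors take for granted, so there is nothing to add.
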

We considerer the following cases according to the sign of $\det[A,B].$ \\

\noindent{\bf Case A.} Suppose that $\det[A,B]<0:$ this condition means that $\beta^2-4\alpha\gamma>0;$ we have two possibilities 

\noindent {\bf A.1.} $\alpha=0:$ {\it i.e.,} $B$ has a unique real eigenvalue. We necessarily have $\beta\neq 0,$ thus, there exists $u$ such that $\Delta(u)=\beta u+\gamma <0.$
\begin{center}
\includegraphics[scale=0.5]{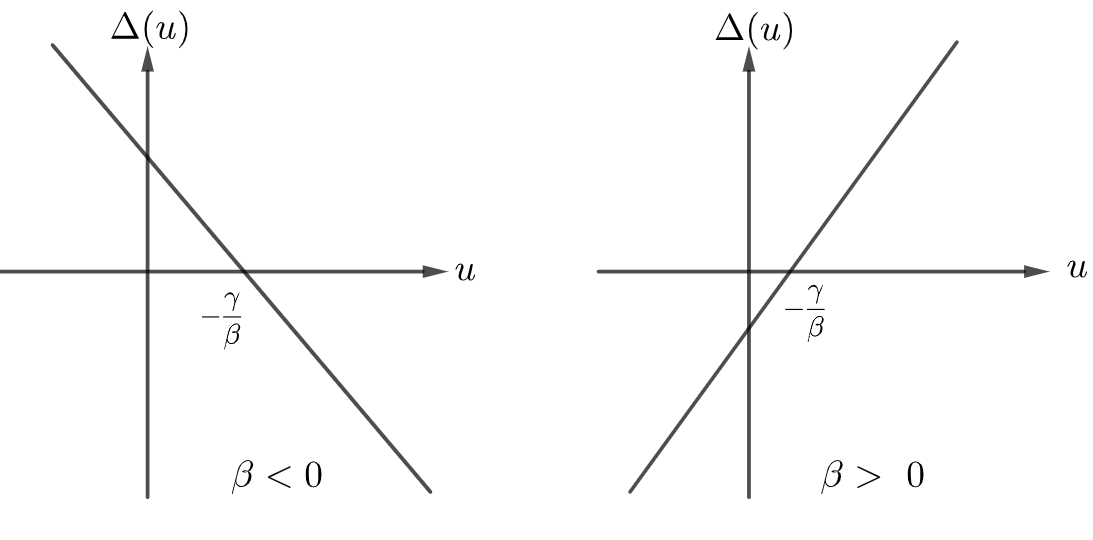}
\end{center}

\noindent {\bf A.2.} $\alpha\neq 0:$ in this case, using \eqref{expdelta1}, we can find a control $u$ such that $\Delta(u)<0.$ For example, if $\alpha>0$ we consider a control such that $\frac{\beta-\sqrt{\beta^2-4\alpha\gamma}}{2\alpha} < u < \frac{\beta+\sqrt{\beta^2-4\alpha\gamma}}{2\alpha}.$
\begin{center}
\includegraphics[scale=0.5]{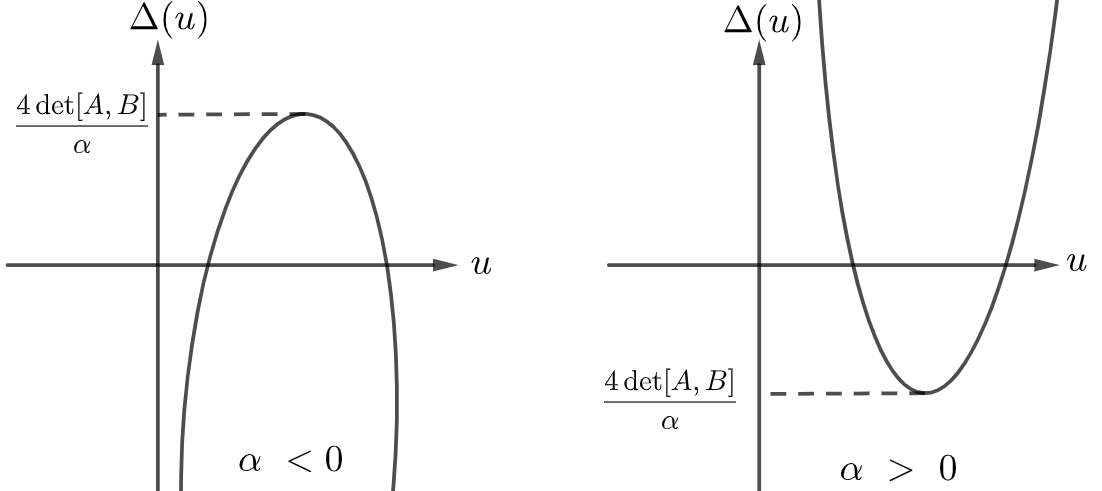}
\end{center}

\noindent{\bf Case B.} Suppose that $\det[A,B]>0:$ in this case we have $\beta^2-4\alpha\gamma<0,$ thus $\alpha\neq 0.$ Writing $\Delta(u)$ as in \eqref{expdelta1}, we have the following possibilities:

\noindent {\bf B.1.} $\alpha<0:$ {\it i.e., $B$ has a complex eigenvalue.} In this case, for all $u$ we have $\Delta(u)<0.$

\noindent {\bf B.2.} $\alpha>0:$ {\it i.e., $B$ has two real different eigenvalues.} For all $u\in\mathbb{R},$ we have $\Delta(u)> 0,$ therefore, the induced angular system $\Sigma\mathbb{P}$ does not controllable on $\mathbb{P}^1.$
\begin{center}
\includegraphics[scale=0.5]{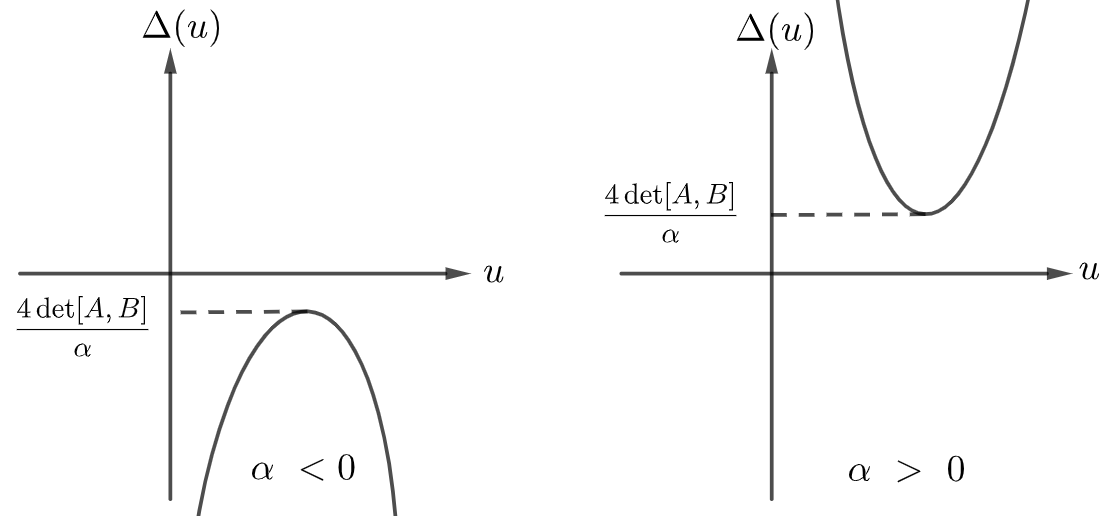}
\end{center}

\noindent{\bf Case C.} Suppose that $\det[A,B]=0:$ this means $\beta^2-4\alpha\gamma=0;$ we have two possibilities  

\noindent {\bf C.1.} $\alpha=0:$ in this case we have $\beta=0,$ thus $\Delta(u)=\gamma$ is a constant. We have $\Delta(u)<0$ for all $u\in\mathbb{R}$ if and only if $\gamma<0,$ {\it i.e.,} if and only if $A$ has a complex eigenvalue.
\begin{center}
\includegraphics[scale=0.5]{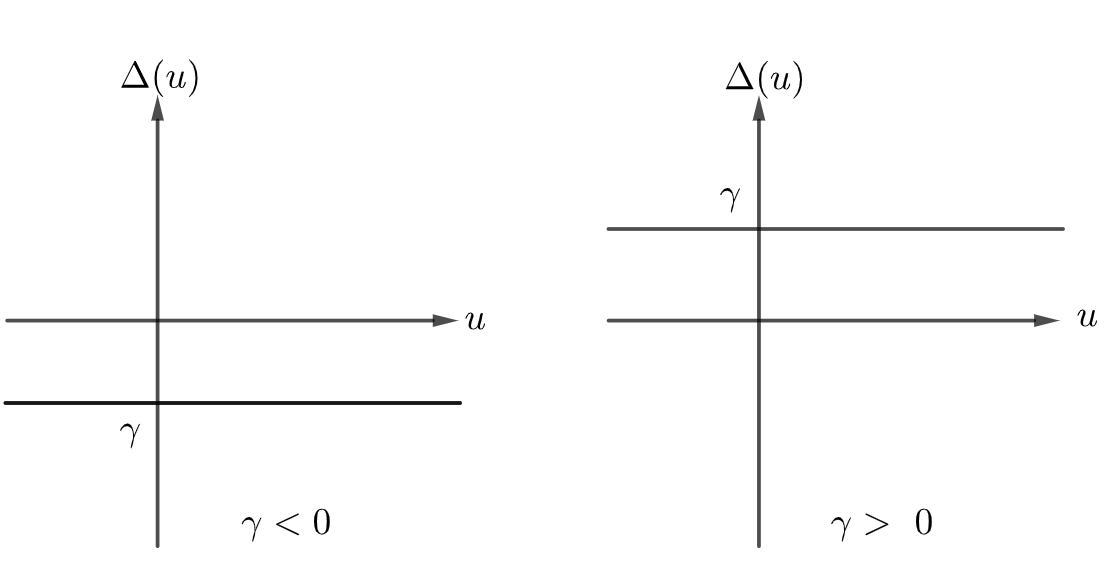}
\end{center}

\noindent {\bf C.2.} $\alpha\neq 0:$ from \eqref{expdelta1}, $\Delta(u)\leq 0,$ for all $u\in\mathbb{R}$ if $\alpha<0,$ {\it i.e.,} if $B$ has a complex eigenvalue. We note that, according to $\beta^2-4\alpha\gamma=0,$ $\alpha<0$ ($B$ has a complex eigenvalue) if and only if $\gamma<0$ ($A$ has a complex eigenvalue). Moreover, if $\alpha>0,$ then $\Delta(u)\geq 0,$ for all $u\in\mathbb{R}.$
\begin{center}
\includegraphics[scale=0.5]{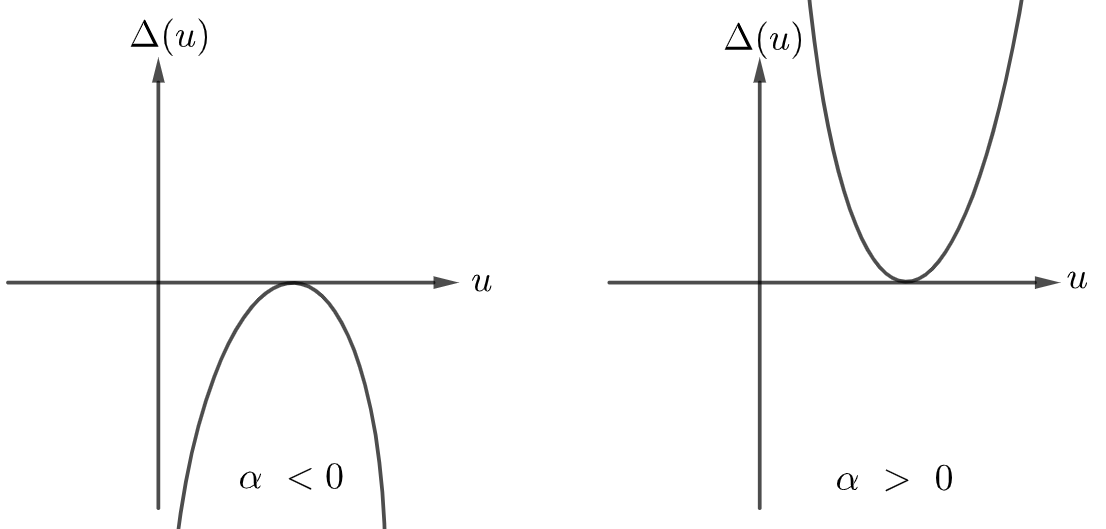}
\end{center}
%We summarize the previous analysis in the following proposition.
\begin{proposicion} There exists $u\in\mathbb{R}$ such that $\Delta(u)<0$ if and only if
\begin{enumerate}
\item[\it 1.] $\det[A,B]<0,$ or
\item[\it 2.] $\det[A,B]>0$ and $B$ has a complex eigenvalue, or
\item[\it 3.] $\det[A,B]=0$ and 
\begin{enumerate}
\item $B$ has a unique real eigenvalue, $\mbox{tr}(\mbox{adj}(A)B)=\mbox{tr}(AB)$ and $A$ has a complex eigenvalue, or
\item $B$ or $A$ has a complex eigenvalue.
\end{enumerate}
\end{enumerate}
\end{proposicion}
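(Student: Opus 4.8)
The plan is to read the statement directly off the exhaustive sign analysis of $\Delta(u)$ already carried out in Cases A, B and C above, so that essentially no new computation is required. The function $\Delta(u)=\alpha u^2+\beta u+\gamma$ of \eqref{delta1} is at most quadratic in $u$, and the question whether $\Delta(u)<0$ for some real $u$ is a standard sign analysis. The three structural inputs are: the discriminant identity $\beta^2-4\alpha\gamma=-16\det[A,B]$ of Lemma \ref{lemadiscriminante}, which turns the trichotomy for $\det[A,B]$ into the trichotomy for the discriminant of $\Delta$ viewed as a quadratic in $u$; the reading of the coefficients in \eqref{abg} as characteristic discriminants, so that $\alpha<0$ (resp. $\alpha=0$) means exactly that $B$ has a complex eigenvalue (resp. a unique real eigenvalue), and likewise $\gamma$ governs the eigenvalues of $A$; and the equivalence $\beta=0\Leftrightarrow\mbox{tr}(\mbox{adj}(A)B)=\mbox{tr}(AB)$ recorded after \eqref{abg}.

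First I would dispose of the two non-degenerate cases. If $\det[A,B]<0$, then $\beta^2-4\alpha\gamma>0$: either $\alpha=0$, in which case $\beta\neq0$ and $\Delta$ is a non-constant affine function, or $\alpha\neq0$, in which case $\Delta$ is a parabola with two distinct real roots; in both situations $\Delta$ attains negative values with no further hypothesis, giving item 1. If $\det[A,B]>0$, then $\beta^2-4\alpha\gamma<0$, forcing $\alpha\neq0$ and leaving $\Delta$ without real roots, so $\Delta$ keeps the sign of $\alpha$ throughout; hence $\Delta(u)<0$ is achievable precisely when $\alpha<0$, i.e. when $B$ has a complex eigenvalue, which is item 2.

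The delicate case, and the one I expect to require the most care, is $\det[A,B]=0$, where $\beta^2=4\alpha\gamma$ and $\Delta$ sits on the boundary between the previous two regimes. Here I would split on $\alpha$. When $\alpha=0$ the identity forces $\beta=0$ (equivalently $\mbox{tr}(\mbox{adj}(A)B)=\mbox{tr}(AB)$) and $\Delta\equiv\gamma$, so $\Delta<0$ holds for some $u$ iff $\gamma<0$; together with $\alpha=0$ meaning $B$ has a unique real eigenvalue, this is exactly item 3(a). When $\alpha\neq0$, the completed-square form \eqref{expdelta1} reduces $\Delta$ to $\alpha\bigl(u+\frac{\beta}{2\alpha}\bigr)^2$, which retains the sign of $\alpha$ away from its double root, so $\Delta$ dips below zero iff $\alpha<0$, i.e. iff $B$ has a complex eigenvalue, falling under item 3(b).

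The only genuine bookkeeping is to confirm that items 3(a) and 3(b) together capture exactly the affirmative sub-cases of $\det[A,B]=0$ and nothing more. For this I would use the constraint $\beta^2=4\alpha\gamma$ to note that $\alpha>0$ forces $\gamma\geq0$, so the outcome ``no admissible $u$'' coincides with ``neither $A$ nor $B$ has a complex eigenvalue''; the same constraint shows $\gamma<0$ forces $\alpha\leq0$, so the clause ``$A$ has a complex eigenvalue'' in item 3(b) is already accounted for by the two sub-cases just analysed and introduces no spurious possibilities. Assembling the three cases then yields the claimed equivalence in both directions.
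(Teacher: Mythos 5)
Your proof is correct and takes essentially the same route as the paper: the Cases A--C preceding the proposition, together with the paper's short formal proof, carry out exactly this sign analysis of the quadratic $\Delta(u)=\alpha u^2+\beta u+\gamma$ using Lemma \ref{lemadiscriminante}, the reading of $\alpha$ and $\gamma$ as the characteristic discriminants of $B$ and $A$, and the completed square \eqref{expdelta1}. The only cosmetic difference is that you organize both implications by the sign of $\det[A,B]$, whereas the paper's displayed proof treats only the forward implication, split by the sign of $\alpha$, and delegates the converse to the preceding case analysis.
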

\begin{proof} Suppose that $\Delta(u)=\alpha u^2+ \beta u +\gamma <0,$ for some $u\in\mathbb{R};$ we will prove that necessarily some condition {\it 1., 2.} or {\it 3.} holds. We consider the following cases: \\
\noindent{Case 1.} $\alpha=0:$ we have the following possibilities \\
\noindent {1.a.} $\beta=0:$ we have $\gamma<0$ since $\Delta(u)<0;$ therefore $\det[A,B]=\frac{-1}{16}(\beta^2-4\alpha\gamma)=0.$\\
\noindent {1.b.} $\beta\neq 0:$ we have that $\det[A,B]=\frac{-1}{16}(\beta^2-4\alpha\gamma)=-\frac{1}{16}\beta^2< 0.$

\noindent{Case 2.} $\alpha<0:$ in this case, every possibility holds according to the sign of $\det[A,B].$ 

\noindent{Case 3.} $\alpha>0:$ writing $\Delta$ as in \eqref{expdelta1}, the unique possibility such that $\Delta(u)<0,$ for some $u\in\mathbb{R},$ is $\beta^2-4\alpha\gamma>0,$ {\it i.e.,} $\det[A,B]<0.$  
\end{proof}

A similar result, but with other technique, was given in \cite[Theorem 4.3]{C}, however, the authors in \cite{C} do not consider the case {\it 3.(a)} of the previous proposition.

\begin{ejemplo}\label{ejn} The bilinear control system, on $\mathbb{R}^2\smallsetminus\{0\},$ given by
\begin{equation*}
\dot{x}=Ax+uBx=\begin{pmatrix}
a & -b \\ b & a
\end{pmatrix}x + u \begin{pmatrix}
c & 0 \\ 0 & c
\end{pmatrix}x, 
\end{equation*} where $bc\neq 0,$ satisfies LARC; in fact, we easily get
\begin{equation*}
\mbox{tr}^2(\mbox{adj}(A)B)-4\det(\mbox{adj}(A)B)=\mbox{tr}^2\begin{pmatrix}
ac & bc \\ -bc & ac
\end{pmatrix}-4\det\begin{pmatrix}
ac & bc \\ -bc & ac
\end{pmatrix}=-4(bc)^2 <0.
\end{equation*} 
Moreover, the induced angular system $\mathbb{P}\Sigma$ is controllable on $\mathbb{P}^1;$ in fact, we have
\begin{align*}
\Delta(u) &=\mbox{tr}^2(A+uB)-4\det(A+uB) \\ &=\mbox{tr}^2\begin{pmatrix}
a+uc & -b \\ b & a+uc
\end{pmatrix} -4\det\begin{pmatrix}
a+uc & -b \\ b & a+uc
\end{pmatrix} \\ 
&=-4b^2
\end{align*} thus $\Delta(u)<0,$ for every constant control $u.$ Finally we note that, this system satisfies the condition {\it 3.(a)} of previous proposition.
\end{ejemplo}

\section{Controllability criterion for  $\Sigma$ on $\mathbb{R}^2\smallsetminus\{0\}$}\label{secESP}

For finish this paper, in this section we consider the Lyapunov spectrum $\Sigma_{L_y}$ of the bilinear control system $\Sigma$ given in \eqref{sp}, {\it i.e.,} we consider the set of Lyapunov exponents
\begin{equation*}
\Sigma_{Ly}=\left\lbrace \lambda(u,x): (u,x)\in\mathcal{U}\times \left( \mathbb{R}^2\smallsetminus\{0\} \right) \right\rbrace,
\end{equation*} where 
$\lambda(u,x)=\limsup_{t\to\infty} \frac{1}{t}\log |\phi(t,x,u)|,$ with $\phi(t,x,u)$ is a solution of $\Sigma.$

We will assume that $\Sigma$ satisfies the Lie algebra rank condition on $\mathbb{R}^2\smallsetminus\{0\}$ and that the induced angular system $\mathbb{P}\Sigma$ is controllable on $\mathbb{P}^1.$ According to \cite[Corollary 12.2.6]{CK}, we need to characterize the condition  $0\in\mbox{int} \Sigma_{L_y}.$

The eigenvalues of $A+uB,$ for $u\in \mathbb{R},$ according to Observation \ref{delta}, are given by
\begin{align*}
\lambda_{1,2}(u) 
%=\frac{1}{2} \left( \mbox{tr}(A+uB)\pm \sqrt{\left[ \mbox{tr}^2(A+uB) \right]-4\det(A+uB)} \right)
= \frac{1}{2} \left( \mbox{tr}(A)+u\  \mbox{tr}(B)\pm \sqrt{\Delta(u)} \right);
\end{align*} 
using the notation
$ \Sigma_{Re}=\{ \mbox{Re}(\lambda_{1,2}(u)): u\in\mathbb{R} \},$ we have $\Sigma_{Re}\subset \Sigma_{L_y}.$ The following lemmas were proved in \cite{C}. 

\begin{lema}\cite[Lemma 6.1]{C}\label{prop1} If the angular system $\mathbb{P}\Sigma,$ given by \eqref{sp0}, is controllable on $\mathbb{P}^1$ and $\mbox{tr}(B)\neq 0,$ then $0\in \mbox{int}(\Sigma_{Re}).$
\end{lema}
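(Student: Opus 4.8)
The plan is to describe $\Sigma_{Re}$ explicitly and show it is an unbounded interval, whence $0$ lies in its interior. First I would record, from the eigenvalue formula in Observation \ref{delta}, that the real parts of the eigenvalues of $A+uB$ are given by the two functions
\[
\lambda_{\max}(u)=f(u)+\tfrac12\sqrt{\max\{\Delta(u),0\}},\qquad \lambda_{\min}(u)=f(u)-\tfrac12\sqrt{\max\{\Delta(u),0\}},
\]
where $f(u)=\tfrac12(\mbox{tr}(A)+u\,\mbox{tr}(B))$. Indeed, when $\Delta(u)\ge 0$ the eigenvalues are real and equal to $\lambda_{\min}(u),\lambda_{\max}(u)$, while when $\Delta(u)<0$ they are complex conjugate with common real part $f(u)$. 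Consequently $\Sigma_{Re}=\lambda_{\min}(\mathbb{R})\cup\lambda_{\max}(\mathbb{R})$, and both $\lambda_{\min},\lambda_{\max}$ are continuous on $\mathbb{R}$ because $\sqrt{\max\{\Delta(u),0\}}$ is.

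Next I would establish that $\Sigma_{Re}$ is an interval. Since $\mathbb{R}$ is connected and $\lambda_{\min},\lambda_{\max}$ are continuous, each of $\lambda_{\min}(\mathbb{R})$ and $\lambda_{\max}(\mathbb{R})$ is an interval. By hypothesis $\mathbb{P}\Sigma$ is controllable on $\mathbb{P}^1$, so Theorem \ref{controlproy} furnishes $u_0\in\mathbb{R}$ with $\Delta(u_0)<0$; at this value $\lambda_{\min}(u_0)=\lambda_{\max}(u_0)=f(u_0)$, a point common to the two intervals. The union of two intervals that share a point is again an interval, hence $\Sigma_{Re}$ is an interval.

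Finally I would bring in the assumption $\mbox{tr}(B)\neq 0$. Then $f$ is a nonconstant affine function, so $f(u)\to+\infty$ and $f(u)\to-\infty$ as $u\to\pm\infty$. Because $\lambda_{\max}(u)\ge f(u)$ and $\lambda_{\min}(u)\le f(u)$ for every $u$, the set $\Sigma_{Re}$ is unbounded both above and below. An interval unbounded in both directions is all of $\mathbb{R}$; in particular $\Sigma_{Re}=\mathbb{R}$, which is stronger than the claim and certainly gives $0\in\mbox{int}(\Sigma_{Re})$.

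I expect the connectedness step to be the only delicate point, and it is exactly where controllability is indispensable: without a control $u_0$ at which the eigenvalues are complex, $\lambda_{\min}(\mathbb{R})$ and $\lambda_{\max}(\mathbb{R})$ may fail to overlap and $\Sigma_{Re}$ can acquire a gap straddling $0$ (for instance when $\det(A+uB)<0$ for all $u$, so that each matrix has one positive and one negative eigenvalue bounded away from $0$). Thus it is precisely the hypothesis of Theorem \ref{controlproy} that excludes such a gap, while the role of $\mbox{tr}(B)\neq 0$ is only to force the unboundedness of $f$.
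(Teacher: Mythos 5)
Your argument is correct, but note that the paper does not actually prove this lemma: it is quoted verbatim from \cite[Lemma 6.1]{C} and the text explicitly defers the proof to that reference (``The following lemmas were proved in \cite{C}''). So your proposal supplies a proof where the paper gives none, and it is a clean, elementary one: writing the real parts as the two continuous branches $f(u)\pm\tfrac12\sqrt{\max\{\Delta(u),0\}}$, observing that each branch has an interval as image, gluing the two intervals at a control $u_0$ with $\Delta(u_0)<0$, and then using $\mbox{tr}(B)\neq 0$ to force unboundedness in both directions, yielding $\Sigma_{Re}=\mathbb{R}$ --- strictly stronger than $0\in\mbox{int}(\Sigma_{Re})$. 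Every step checks out. The one point to flag is your appeal to Theorem \ref{controlproy} to produce $u_0$ with $\Delta(u_0)<0$: that theorem carries the standing hypothesis that $\Sigma$ satisfies LARC, which is not in the statement of the lemma itself but is a blanket assumption of Section \ref{secESP}; without LARC, controllability of $\mathbb{P}\Sigma$ can occur with $\Delta\equiv 0$ (the paper's Case 1.2(1)), and your gluing step would need a separate (easy) argument there. Under the section's standing assumptions your proof is complete, and your closing remark correctly identifies why controllability is what rules out a gap in $\Sigma_{Re}$ around $0$.
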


\begin{lema}\cite[Lemma 6.2]{C}\label{prop2} If the angular system $\mathbb{P}\Sigma,$ given by \eqref{sp0}, is controllable on $\mathbb{P}^1$ and $\mbox{tr}(B)= 0,$ then $0\in \mbox{int}(\Sigma_{Re})$ if and only if $0<\mbox{tr}^2(AB)-4\det(AB).$
\end{lema}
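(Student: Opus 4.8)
The plan is to characterize $0\in\mbox{int}(\Sigma_{Re})$ when $\mbox{tr}(B)=0$ by analyzing the real parts of the eigenvalues $\lambda_{1,2}(u)$ as $u$ ranges over $\mathbb{R}$. Since $\mbox{tr}(B)=0$, the real part simplifies: from the eigenvalue formula we have $\mbox{Re}(\lambda_{1,2}(u))=\frac{1}{2}\mbox{tr}(A)$ whenever $\Delta(u)<0$ (the complex case), while in the real regime $\Delta(u)\geq 0$ the two real parts are $\frac{1}{2}(\mbox{tr}(A)\pm\sqrt{\Delta(u)})$. First I would observe that, because $\mathbb{P}\Sigma$ is controllable, by Theorem \ref{controlproy} there is an open interval of controls $u$ for which $\Delta(u)<0$; on that interval both eigenvalues contribute the constant real part $\frac{1}{2}\mbox{tr}(A)$. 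So the set $\Sigma_{Re}$ contains this value together with the values $\frac{1}{2}(\mbox{tr}(A)\pm\sqrt{\Delta(u)})$ on the complementary real regime.

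Next I would reduce the condition $0\in\mbox{int}(\Sigma_{Re})$ to a statement about $\mbox{tr}(A)$ and the boundary behavior of $\Delta$. With $\mbox{tr}(B)=0$, the expression for $\Delta(u)=\alpha u^2+\beta u+\gamma$ from \eqref{delta1} has $\alpha=\mbox{tr}^2(B)-4\det(B)=-4\det(B)$ and $\beta=4\mbox{tr}(AB)$, since $\mbox{tr}(A)\mbox{tr}(B)=0$. The key computation is to express $\mbox{tr}^2(AB)-4\det(AB)$ in these terms and to show it governs whether $0$ lies strictly interior to $\Sigma_{Re}$. Concretely, I expect that as $u$ crosses the boundary of the interval where $\Delta(u)<0$, the real parts $\frac{1}{2}(\mbox{tr}(A)\pm\sqrt{\Delta(u)})$ open up symmetrically around $\frac{1}{2}\mbox{tr}(A)$, so $\Sigma_{Re}$ is an interval containing $\frac{1}{2}\mbox{tr}(A)$ in its interior. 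The condition $0\in\mbox{int}(\Sigma_{Re})$ then becomes a requirement that this interval straddles zero strictly, which should be equivalent to the sign condition $0<\mbox{tr}^2(AB)-4\det(AB)$.

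The main obstacle will be the precise bookkeeping that translates the geometric picture of the real parts into the clean inequality $0<\mbox{tr}^2(AB)-4\det(AB)$. I would proceed by splitting into the subcases $\det(B)>0$, $\det(B)<0$, and $\det(B)=0$ (equivalently $\alpha<0$, $\alpha>0$, $\alpha=0$), since these determine whether $\Delta$ is an upward or downward parabola, which in turn controls whether the real-eigenvalue branch extends to $\pm\infty$ or stays bounded. In each subcase I would identify explicitly the supremum and infimum of $\Sigma_{Re}$ and check when $0$ falls strictly between them. The delicate point is confirming that the quantity $\mbox{tr}^2(AB)-4\det(AB)$ is exactly the discriminant-type expression that decides the sign, which I would verify using \eqref{detf1}, the Cayley-Hamilton identity, and the trace formula \eqref{foradj}, in the same computational spirit as Lemma \ref{lemadiscriminante}. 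Once that algebraic identity is pinned down, both directions of the equivalence follow from the monotonicity of $\sqrt{\Delta(u)}$ on the real regime.
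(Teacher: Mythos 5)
First, a point of reference: the paper does not prove this lemma at all --- it is imported verbatim from \cite[Lemma 6.2]{C} (``The following lemmas were proved in \cite{C}''), so there is no in-paper argument to compare yours against. Judged on its own terms, your strategy is the right one and can be closed, but as written it contains one false intermediate claim and leaves the decisive algebraic step unverified. The false claim is that ``$\Sigma_{Re}$ is an interval containing $\frac{1}{2}\mbox{tr}(A)$ in its interior'': this fails when $\sup_u\Delta(u)\leq 0$, which is compatible with controllability of $\mathbb{P}\Sigma$ (take $A=\left(\begin{smallmatrix}0&-1\\1&0\end{smallmatrix}\right)$, $B=\left(\begin{smallmatrix}0&1\\-1&0\end{smallmatrix}\right)$, so $\Delta(u)=-4(u-1)^2$); there $\Sigma_{Re}$ degenerates to the single point $\frac{1}{2}\mbox{tr}(A)$ and has empty interior. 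Your final reduction (``check when $0$ falls strictly between the infimum and supremum'') does handle this case correctly, so the slip is local, but the sentence should be removed.

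The step you defer is the one that actually carries the lemma. With $\mbox{tr}(B)=0$ one has $\alpha=-4\det(B)$, $\beta=4\,\mbox{tr}(AB)$, $\gamma=\mbox{tr}^2(A)-4\det(A)$, and using $\det(AB)=\det(A)\det(B)$ together with Lemma \ref{lemadiscriminante} a direct computation gives
\begin{equation*}
16\left(\mbox{tr}^2(AB)-4\det(AB)\right)=\left(\beta^2-4\alpha\gamma\right)+4\alpha\,\mbox{tr}^2(A)
=-16\det[A,B]-16\det(B)\,\mbox{tr}^2(A).
\end{equation*}
Granting this identity, the case analysis you propose closes the proof: if $\alpha>0$, or $\alpha=0$ with $\beta\neq 0$, then $\Delta$ is unbounded above, $\Sigma_{Re}=\mathbb{R}$, and controllability forces $\beta^2-4\alpha\gamma>0$, so both sides of the equivalence hold; if $\alpha=\beta=0$ then controllability forces $\gamma<0$ and both sides fail; if $\alpha<0$ then $\Sigma_{Re}$ is the closed interval of radius $\frac{1}{2}\sqrt{\max(M,0)}$ centered at $\frac{1}{2}\mbox{tr}(A)$, where $M=\frac{\beta^2-4\alpha\gamma}{-4\alpha}$ is the maximum of $\Delta$, and $0\in\mbox{int}(\Sigma_{Re})$ iff $\mbox{tr}^2(A)<M$, which upon multiplying by $-4\alpha>0$ is exactly the positivity of the displayed quantity. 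So the proposal is a correct plan rather than a proof; supplying the boxed identity and deleting the erroneous ``nonempty interior'' assertion turns it into one.
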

\begin{ejemplo} We consider the bilinear control system on $\mathbb{R}^2\smallsetminus\{0\}$ given in Example \ref{ej1}:
\begin{equation*}
\dot{x}=\begin{pmatrix}
2 & -1 \\ 0 & 1
\end{pmatrix}x + u \begin{pmatrix}
0 & 1 \\ -1 & 0
\end{pmatrix}x; 
\end{equation*} we recall that, this system satisfies LARC (see \eqref{ej3_1}) and the induced angular systems $\mathbb{P}\Sigma$ is controllable on the projective space $\mathbb{P}^1.$ We note that $\mbox{tr}(B)=0.$ The eigenvalues of $A+uB$ are given by
\begin{align*}
\lambda_{1,2}(u) &=\frac{1}{2} \left( \mbox{tr}(A+uB)\pm \sqrt{\left[ \mbox{tr}^2(A+uB) \right]-4\det(A+uB)} \right) \\
&= \frac{1}{2} \left( 3\pm \sqrt{1+4u-4u^2} \right);
\end{align*} now, we note that $\Delta(u)=1+4u-4u^2\geq 0$ for all $u\in \left[ \frac{1-\sqrt{2}}{2}, \frac{1+\sqrt{2}}{2} \right];$ moreover, the maximum value of  $\Delta(u)$ is given by
\begin{equation*}
\frac{4\det[A,B]}{-4}=-\det\begin{pmatrix}
1 & 1 \\ 1 & -1
\end{pmatrix}=2,
\end{equation*} therefore $\mbox{Re}(\lambda_{1,2}(u))\in \left[ \frac{3-\sqrt{2}}{2},\frac{3+\sqrt{2}}{2} \right],$ thus $0\notin \mbox{int}(\Sigma_{Re}).$ 
\end{ejemplo}

\begin{obs} The conclusion of the previous example coincide with the affirmation of  Lemma \ref{prop2}: in fact, if $\mbox{tr}(B)=0,$ according to \eqref{foradj} we have $\mbox{tr}(\mbox{adj}(A)B)=-\mbox{tr}(AB);$ therefore, from previous example y \eqref{ej3_1}, the Lie algebra rank condition means
\begin{equation*}
\mbox{tr}^2(AB)-4\det(AB)=\mbox{tr}^2(\mbox{adj}(A)B)-4\det(\mbox{adj}(A)B)<0.
\end{equation*}
\end{obs}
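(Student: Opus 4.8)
The plan is to verify the displayed chain of equalities by reducing it to the two elementary $2\times 2$ identities already invoked earlier in the paper, and then to read off the conclusion from Lemma \ref{prop2}. The guiding remark is that the LARC discriminant $\mbox{tr}^2(\mbox{adj}(A)B)-4\det(\mbox{adj}(A)B)$ and the $AB$-discriminant $\mbox{tr}^2(AB)-4\det(AB)$ coincide precisely because $\mbox{tr}(B)=0$, so the whole statement is an instance of this coincidence for the concrete matrices of the preceding example.

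First I would compare the trace terms. Substituting $\mbox{tr}(B)=0$ into \eqref{foradj} gives $\mbox{tr}(\mbox{adj}(A)B)=\mbox{tr}(A)\mbox{tr}(B)-\mbox{tr}(AB)=-\mbox{tr}(AB)$, hence $\mbox{tr}^2(\mbox{adj}(A)B)=\mbox{tr}^2(AB)$. Next I would compare the determinant terms: by multiplicativity of the determinant together with $\det(\mbox{adj}(A))=\det(A)$ (the $2\times 2$ fact recorded at the close of the proof of Lemma \ref{lema2}), one obtains $\det(\mbox{adj}(A)B)=\det(\mbox{adj}(A))\det(B)=\det(A)\det(B)=\det(AB)$. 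Putting the two computations together produces the central identity $\mbox{tr}^2(\mbox{adj}(A)B)-4\det(\mbox{adj}(A)B)=\mbox{tr}^2(AB)-4\det(AB)$, valid for every $A$ and every $B$ with $\mbox{tr}(B)=0$.

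Finally I would specialize to the system of the previous example. Its LARC value was computed in \eqref{ej3_1} as $\mbox{tr}^2(\mbox{adj}(A)B)-4\det(\mbox{adj}(A)B)=-7<0$, so the identity forces $\mbox{tr}^2(AB)-4\det(AB)=-7<0$ as well, which is exactly the displayed equality. Since $\mbox{tr}(B)=0$ and $\mathbb{P}\Sigma$ is controllable on $\mathbb{P}^1$, Lemma \ref{prop2} asserts that $0\in\mbox{int}(\Sigma_{Re})$ if and only if $0<\mbox{tr}^2(AB)-4\det(AB)$; because the latter inequality fails here, we conclude $0\notin\mbox{int}(\Sigma_{Re})$, recovering the conclusion of the example and confirming the consistency claimed in the observation.

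There is no genuine obstacle in this argument: it is a two-line algebraic identity followed by an appeal to Lemma \ref{prop2}. The only step deserving a word of care is the determinant comparison, where one must recall the $2\times 2$ relation $\det(\mbox{adj}(A))=\det(A)$ before factoring $\det(\mbox{adj}(A)B)$; everything else follows from the single substitution $\mbox{tr}(B)=0$ into \eqref{foradj}.
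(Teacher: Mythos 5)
Your argument is correct and follows the same route as the paper: substituting $\mbox{tr}(B)=0$ into \eqref{foradj} to equate the trace terms, and using $\det(\mbox{adj}(A)B)=\det(\mbox{adj}(A))\det(B)=\det(A)\det(B)=\det(AB)$ for the determinant terms, before reading off the sign from \eqref{ej3_1} and Lemma \ref{prop2}. The only difference is that you spell out the determinant comparison, which the paper leaves implicit.
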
 

Finally we obtain the followings controllability criteria for the bilinear control system $\Sigma$ on $\mathbb{R}^2\smallsetminus\{0\};$ these results improve the criterion presented in \cite[Theorem 6.3]{C} since we explicitly know the Lie algebra rank condition for $\Sigma.$ According to Theorems \ref{LARC_TEO1} and \ref{controlproy}, Observation \ref{obs1} and Lemma \ref{prop1} we have:
\begin{teorema}\label{critcontrol} The bilinear control system $\Sigma$ is controllable on $\mathbb{R}^2\smallsetminus\{0\}$ if 
\begin{enumerate}
\item $\mbox{tr}^2\left(\mbox{adj}\left(\tilde{A}\right)\tilde{B}\right)-4\det\left(\mbox{adj}\left(\tilde{A}\right)\tilde{B}\right) < 0,$ for some $\tilde{A}$ and $\tilde{B}$ in $\mathcal{L}_{\Sigma},$
\item there exists $u\in\mathcal{U}$ such that $\Delta(u)< 0,$ ({\it i.e.,} $A+uB$ has a complex eigenvalue), and
\item $\mbox{tr}(B)\neq 0.$
\end{enumerate} 
\end{teorema}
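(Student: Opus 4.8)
The plan is to recover the statement as a direct application of the controllability criterion of \cite[Corollary 12.2.6]{CK}: this result guarantees that $\Sigma$ is controllable on $\mathbb{R}^2\smallsetminus\{0\}$ provided that $\Sigma$ satisfies the Lie algebra rank condition, that the induced angular system $\mathbb{P}\Sigma$ is controllable on $\mathbb{P}^1,$ and that $0\in\mbox{int}(\Sigma_{Ly}).$ Accordingly, the whole argument reduces to showing that hypotheses (1), (2) and (3) supply exactly these three ingredients.

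First I would dispatch the Lie algebra rank condition. By Theorem \ref{LARC_TEO1}, the existence of $\tilde{A},\tilde{B}\in\mathcal{L}_{\Sigma}$ with $\mbox{tr}^2(\mbox{adj}(\tilde{A})\tilde{B})-4\det(\mbox{adj}(\tilde{A})\tilde{B})<0$ is equivalent to $\Sigma$ satisfying LARC on $\mathbb{R}^2\smallsetminus\{0\},$ so hypothesis (1) gives the first ingredient verbatim. Next, hypothesis (2) asserts the existence of $u$ with $\Delta(u)<0;$ by Theorem \ref{controlproy} (read together with Observations \ref{delta} and \ref{obs1}) this is precisely the controllability of $\mathbb{P}\Sigma$ on $\mathbb{P}^1,$ which is the second ingredient.

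It remains to obtain the spectral condition $0\in\mbox{int}(\Sigma_{Ly}).$ Here I would feed the controllability of $\mathbb{P}\Sigma$ just established, together with the assumption $\mbox{tr}(B)\neq 0$ from hypothesis (3), into Lemma \ref{prop1}, which yields $0\in\mbox{int}(\Sigma_{Re}).$ Since $\Sigma_{Re}\subset\Sigma_{Ly},$ monotonicity of the interior gives $\mbox{int}(\Sigma_{Re})\subset\mbox{int}(\Sigma_{Ly}),$ hence $0\in\mbox{int}(\Sigma_{Ly}),$ as required.

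With the three ingredients in hand, \cite[Corollary 12.2.6]{CK} concludes that $\Sigma$ is controllable on $\mathbb{R}^2\smallsetminus\{0\}.$ I do not expect a genuine obstacle: the theorem is essentially a repackaging of the earlier results, and the only delicate point is the passage $\mbox{int}(\Sigma_{Re})\subset\mbox{int}(\Sigma_{Ly}),$ which rests on the elementary fact that the interior of a set is contained in the interior of any set that contains it.
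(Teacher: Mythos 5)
Your proposal is correct and follows essentially the same route as the paper, which proves Theorem \ref{critcontrol} precisely by combining Theorem \ref{LARC_TEO1}, Theorem \ref{controlproy} with Observation \ref{obs1}, and Lemma \ref{prop1}, and then invoking \cite[Corollary 12.2.6]{CK}. Your additional remark that $\Sigma_{Re}\subset\Sigma_{Ly}$ implies $\mbox{int}(\Sigma_{Re})\subset\mbox{int}(\Sigma_{Ly})$ is a valid (and welcome) explicit justification of a step the paper leaves implicit.
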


\noindent Analogously, from Theorem \ref{controlproy}, Observations \ref{obs0} and \ref{obs1}, and Lemma \ref{prop2}, we have:

\begin{teorema}\label{critcontrol1} The bilinear control system $\Sigma$ is controllable on $\mathbb{R}^2\smallsetminus\{0\}$ if 
\begin{enumerate}
\item $\det(A)\det[A,B]>0$ or $\det(B)\det[A,B]>0,$
\item there exists $u\in\mathcal{U}$ such that $\Delta(u)< 0,$ ({\it i.e.,} $A+uB$ has a complex eigenvalue), and
\item $\mbox{tr}(B)=0.$
\end{enumerate} 
\end{teorema}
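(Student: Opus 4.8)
The plan is to reduce controllability of $\Sigma$ on $\mathbb{R}^2\smallsetminus\{0\}$ to the three ingredients furnished by \cite[Corollary 12.2.6]{CK}: the Lie algebra rank condition, controllability of the induced angular system $\mathbb{P}\Sigma$ on $\mathbb{P}^1$, and the inclusion $0\in\mbox{int}(\Sigma_{Ly})$. I would establish each of these from the three hypotheses in turn.

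Hypothesis (1) is exactly condition (2) of Observation \ref{obs0}, so it yields LARC. Granting LARC, hypothesis (2) provides $u$ with $\Delta(u)<0$, so Theorem \ref{controlproy} together with Observation \ref{obs1} makes $\mathbb{P}\Sigma$ controllable on $\mathbb{P}^1$. Hence the first two ingredients are immediate, and the entire difficulty lies in proving $0\in\mbox{int}(\Sigma_{Ly})$.

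For the last ingredient I would use hypothesis (3), $\mbox{tr}(B)=0$, to apply Lemma \ref{prop2}: since $\mathbb{P}\Sigma$ is controllable, one has $0\in\mbox{int}(\Sigma_{Re})\subseteq\mbox{int}(\Sigma_{Ly})$ as soon as $X:=\mbox{tr}^2(AB)-4\det(AB)>0$, so everything reduces to this single inequality. Two preliminary identities drive the argument. First, because $\mbox{tr}(B)=0$, substituting into the expression for $\det[A,B]$ obtained in the proof of Lemma \ref{lemadiscriminante} collapses it to
\[
X=-\det[A,B]-\mbox{tr}^2(A)\det(B).
\]
Second, $\det(AB)=\det(A)\det(B)$, so whenever $\det(A)$ and $\det(B)$ have opposite signs one has $\det(AB)<0$ and therefore $X\geq -4\det(AB)>0$ for free.

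The crux, and the step I expect to be the main obstacle, is an auxiliary sign fact: if $\mbox{tr}(B)=0$ and $\det(B)\geq 0$ (that is, $B$ has purely imaginary or zero eigenvalues), then $\det[A,B]\leq 0$ for every $A$. I would prove it by writing $[A,B]=[A_0,B]$ with $A_0$ the traceless part of $A$, reducing via $B^2=-\det(B)I_2$ to the identity $\det[A_0,B]=\mbox{tr}(A_0^2)\,\mbox{tr}(B^2)-\mbox{tr}^2(A_0B)$ and reading it as a reverse Cauchy--Schwarz inequality for the (Lorentzian) trace form $\langle X,Y\rangle=\mbox{tr}(XY)$ on $sl(2,\mathbb{R})$ at the non-spacelike vector $B$; equivalently, one conjugates $B$ to a scaled rotation or a nilpotent block and computes directly. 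Granting this fact, $X>0$ follows by splitting on the sign of $\det(B)$. If $\det(B)\geq 0$, the fact plus hypothesis (1) (which forbids $\det[A,B]=0$) give $\det[A,B]<0$; since then $\det(B)\det[A,B]\leq 0$, hypothesis (1) must hold through $\det(A)\det[A,B]>0$, so $\det(A)<0$, and either $\det(B)>0$ (whence $\det(AB)<0$ and $X>0$) or $\det(B)=0$ (whence $X=-\det[A,B]>0$). If instead $\det(B)<0$, then $\alpha=-4\det(B)>0$ and hypothesis (2) forces the minimum $4\det[A,B]/\alpha$ of the upward parabola $\Delta$ to be negative, i.e. $\det[A,B]<0$; the displayed identity then gives $X=-\det[A,B]+\mbox{tr}^2(A)\,|\det(B)|>0$. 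With $0\in\mbox{int}(\Sigma_{Ly})$ secured, \cite[Corollary 12.2.6]{CK} completes the proof.
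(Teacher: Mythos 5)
Your proposal is correct, and its skeleton coincides with the paper's: LARC from condition (2) of Observation \ref{obs0}, projective controllability from Theorem \ref{controlproy} and Observation \ref{obs1}, the spectral condition from Lemma \ref{prop2}, and assembly via \cite[Corollary 12.2.6]{CK}. The difference is that the paper's proof is literally that one-sentence citation chain and never verifies the hypothesis of Lemma \ref{prop2}, namely $\mbox{tr}^2(AB)-4\det(AB)>0$; this is the genuinely nontrivial step, since Lemma \ref{prop2} is an equivalence and the inequality does not follow formally from the cited statements. You close exactly this gap. Your identity $\mbox{tr}^2(AB)-4\det(AB)=-\det[A,B]-\mbox{tr}^2(A)\det(B)$ (for $\mbox{tr}(B)=0$) is a correct specialization of Lemma \ref{lemadiscriminante}, your auxiliary sign fact ($\mbox{tr}(B)=0$ and $\det(B)\geq 0$ imply $\det[A,B]\leq 0$) checks out by conjugating $B$ to a scaled rotation or nilpotent block, and the ensuing case split on the sign of $\det(B)$ — using hypothesis (1) to force $\det(A)\det[A,B]>0$ when $\det(B)\geq 0$, and hypothesis (2) to force $\det[A,B]<0$ when $\det(B)<0$ via the minimum $4\det[A,B]/\alpha$ of the upward parabola — correctly yields $\mbox{tr}^2(AB)-4\det(AB)>0$ in every case. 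So your argument is not just a different route; it supplies a verification the paper omits, and it would be a worthwhile addition to make the theorem's proof self-contained.
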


\paragraph{\bf Acknowledgements.} The authors thanks the members of the Control Dynamics Project for the different talks and comments on the results of this work.

\Addresses

\end{document}